\newtheorem{theorem}{Theorem}
\newtheorem{proposition}[theorem]{Proposition}
\newtheorem{corollary}[theorem]{Corollary}
\newtheorem{definition}[theorem]{Definition}
\theoremstyle{definition}
\newtheorem{example}[theorem]{Example}
\newtheorem*{notation}{Notation}
\newcommand{\End}{\operatorname{End}}
\newcommand{\Aut}{\operatorname{Aut}}
\newcommand{\Vol}{\operatorname{Vol}}
\newcommand{\cond}{\operatorname{cond}}
\newcommand{\disc}{\operatorname{disc}}
\newcommand{\Tr}{\operatorname{Tr}}
\newcommand{\Fpbar}{\overline{\mathbb{F}}_p}
\newcommand{\SSOpr}{{\operatorname{SS}_\mathcal{O}^{\text{pr}}}}
\theoremstyle{plain}
\newtheorem{remark}[theorem]{Remark}
\newtheorem{lemma}[theorem]{Lemma}
\newcommand{\ZZ}{\mathbb{Z}}
\newcommand{\QQ}{\mathbb{Q}}
\newcommand{\Fp}{\mathbb{F}_p}
\newcommand{\treestar}{\ensuremath{\operatorname{tree}^\blacklozenge}}
\newif\ifcomments
\newcommand{\SA}[1]{\textcolor{violet}{{\sf (Sarah:} {\sl{#1})}}}
\newcommand{\RB}[1]{\textcolor{red}{{\sf (Ross:} {\sl{#1})}}}
\newcommand{\travis}[1]{\textcolor{orange}{{\sf (Travis:} {\sl{#1})}}}
\newcommand{\jason}[1]{\textcolor{green!70!black}{{\sf (Jason:} {\sl{#1})}}}
\newcommand{\JC}[1]{\textcolor{magenta}{{\sf (James C:} {\sl{#1})}}}
\newcommand{\WG}[1]{\textcolor{olive}{{\sf (Wissam:} {\sl{#1})}}}
\newcommand{\km}[1]{\textcolor{blue}{{\sf (Krystal:} {\sl{#1})}}}
\newcommand{\RB}[1]{}
\newcommand{\SA}[1]{}
\newcommand{\travis}[1]{}
\newcommand{\jason}[1]{}
\newcommand{\JC}[1]{}
\newcommand{\WG}[1]{}
\newcommand{\km}[1]{}
\title{\texorpdfstring{Cycles and Cuts in Supersingular \(L\)-Isogeny Graphs}{Cycles and Cuts in Supersingular L-Isogeny Graphs}}
\author[vtmath]{Sarah Arpin}
\affiliation[vtmath]{organization={Department of Mathematics, Virginia Tech},
            city={Blacksburg},
            state={Virginia},
            country={USA}}
\author[bristolcs]{Ross Bowden}
\author[bristolcs]{James Clements}
\affiliation[bristolcs]{organization = {School of Computer Science, University of Bristol},
city={Bristol},
country={United Kingdom}
}
\author[ucfmath]{Wissam Ghantous}
\affiliation[ucfmath]{organization = {Department of Mathematics, University of Central Florida},
city={Orlando},
state={Florida},
country = {USA}
}
\author[vtmath]{Jason T.~LeGrow}
\author[uvmcs]{Krystal Maughan}
\affiliation[uvmcs]{organization = {Department of Computer Science, University of Vermont},
city = {Burlington},
state = {Vermont},
country = {USA}
}
\begin{document}


\begin{abstract}
Supersingular elliptic curve isogeny graphs underlie isogeny-based cryptography. For isogenies of a single prime degree $\ell$, their structure has been investigated graph-theoretically.
We generalise the notion of $\ell$-isogeny graphs to $L$-isogeny graphs (studied in the prime field case by Delfs and Galbraith), where $L$ is a set of small primes dictating the allowed isogeny degrees in the graph. We analyse the graph-theoretic structure of $L$-isogeny graphs. Our approaches may be put into two categories: cycles and graph cuts. 

On the topic of cycles, we provide: a count for the number of cycles in the $L$-isogeny graph with cyclic kernels using traces of Brandt matrices; an efficiently computable estimate based on this approach; and a third ideal-theoretic count for a certain subclass of $L$-isogeny cycles.
We provide code to compute each of these three counts.   

On the topic of graph cuts, we compare several algorithms to compute graph cuts which minimise a measure called the \textit{edge expansion}, outlining a cryptographic motivation for doing so. Our results show that a \emph{greedy neighbour} algorithm out-performs standard spectral algorithms for computing optimal graph cuts. We provide code and study explicit examples.

Furthermore, we describe several directions of active and future research. 
\end{abstract}

\begin{keyword}
    14K02 \sep  14H52 \sep 11T71 \sep 11S31
\end{keyword} 

\maketitle

\section{Introduction}
Isogeny-based cryptography has spurred significant research into supersingular elliptic curve isogeny graphs. Research advancements on the underlying structures are increasingly critical as cryptographic protocols evolve. Supersingular elliptic curve $\ell$-isogeny graphs are $(\ell+1)$-regular, Ramanujan \cite{Pizer80} and individual isogeny steps in the graph have a low computational cost, making these graphs desirable for cryptographic applications. The first supersingular isogeny-based cryptographic protocol \cite{CGL09} was developed quite recently, and mathematicians have only been studying these graphs with cryptographic applications in mind for the past two decades.

In this work, we consider supersingular elliptic curve $L$-isogeny graphs $\mathcal{G}(p,L)$ over $\Fpbar$ where $L = \{\ell_1,\dots, \ell_r\}$ is a collection of allowed prime isogeny degrees. This work is motivated by \cite{Delfs16} who studied such graphs over $\Fp$. In isogeny-based cryptographic protocols where multiple degrees are allowed, an $L$-isogeny graph is the structure underlying the security of the protocol. These graphs are $(\ell_1 + \cdots + \ell_r + r)$-regular, but no longer satisfy the Ramanujan property. Having more edges means these graphs have more \emph{cycles}. Cycles in $\ell$-isogeny graphs have been well-studied \cite{bank2019cycles,Eisentraeger20,fuselier2023,orvis2024}. A graph cycle starting at a vertex corresponding to the elliptic curve $E$ gives an endomorphism of $E$. Computing the endomorphism ring of a supersingular elliptic curve is an active area of research and a question which underlies almost all of isogeny-based cryptography. 

\subsection{Our contributions}

Our results concern two graph-theoretic structures in $L$-isogeny graphs: isogeny cycles (as defined in~\Cref{def:isogenycycle}) and cuts.

\begin{description}[wide, labelindent=0pt]

\item[Cycles.] We provide explicit counts of cycles in \(L\)-isogeny graphs in Section~\ref{sec:countingcycles}: one count using the theory of Brandt matrices in Section~\ref{ssec:brandt} and the other count using the theory of embeddings in quaternion algebras in Section~\ref{ssec:idealcounting}. These two sections count slightly different classes of cycles.

\item[Cuts.] We study edge expansion in the $L$-isogeny graph using methodology inspired by Fiedler cuts in Section~\ref{sec:graphcuts}. We provide a comparison with alternative algorithms for finding cuts with good edge expansion, and show heuristically that the greedy neighbour algorithm out-perform Fiedler's algorithm. 
\end{description}

We provide code in SageMath \cite{sage} at:
\url{https://github.com/jtcc2/cycles-and-cuts}.
This includes implementations of our counting arguments, clustering algorithms, figures, and examples.

\subsection{Related work}

While this work was in preparation, the authors were made aware of \cite{Kambe24}, which also studies cycles in $L$-isogeny graphs. The authors \cite{Kambe24} take a computational approach with the goal of computing endomorphism rings using cycles found in $L$-isogeny graphs, applying an approach similar to \cite{Eisentraeger20}. Their study of cycles restricts to a very special class, aligning with the work of \cite{Eisentraeger20}. Our work is a complementary extension: we consider the general class of isogeny cycles and provide explicit cycle counts.

\subsection{Acknowledgements}
This work was completed as part of the Banff International Research Station (BIRS) Isogeny Graphs in Cryptography Workshop in 2023. This workshop was held in-person at BIRS in Banff, Alberta, Canada as well as in-person in Bristol, UK. We would like to express our gratitude to the organisers, Victoria de Quehen, Chloe Martindale, and Christophe Petit.  This particular working group, led by Sarah Arpin, Giulio Codogni, and Travis Morrison, had participants in both Banff and Bristol. We received a lot of interest in this topic, and broke into smaller project groups. The authors are indebted especially to contributions by Giulio Codogni. The first and fifth authors are supported in part by faculty fellowships from the Commonwealth Cyber Initiative (CCI), an investment in the advancement of cyber R\&D, innovation, and workforce development. For more information about CCI, visit \url{www.cyberinitiative.org}. The authors further express their gratitude to their fellow group members for extremely helpful conversations, including: Kirsten Eisentr\"ager, Jun Bo Lau, William Mahaney, Travis Morrison, Eli Orvis, James Rickards, Maria Sabitova, Gabrielle Scullard, and Lukas Zobernig.  We extend our thanks to the anonymous reviewers for their careful reading of this manuscript and many suggestions to improve this work.

\section{Preliminaries}

\subsection{Supersingular elliptic curves}
We recall some standard facts about supersingular elliptic curves. For more detail, see \cite{ATAEC,VoightQuaternionAlgebras}.
An elliptic curve over a finite field $\mathbb{F}_q$ is supersingular if and only if the geometric endomorphism ring of $E$, $\End(E):=\End_{\overline{\mathbb{F}}_q}(E)$, is a maximal order in a quaternion algebra. The classical Deuring correspondence \cite{Deuring41} gives a categorical equivalence through which we can interpret isogenies of elliptic curves as left ideals of maximal orders in a quaternion algebra. To a given left ideal $I$ of $\End(E)$, we associate an isogeny $\varphi_I$ with kernel given by the scheme-theoretic intersection
\[\ker\varphi_I = \bigcap_{\alpha\in I}\ker\alpha.\]
When $I$ is a principal ideal, $\varphi_I$ is an endomorphism. If two left ideals $I$, $J$ of $\End(E)$ are in the same ideal class, the codomains of $\varphi_I$ and $\varphi_J$ are isomorphic. We almost have a group action: however, the set of left ideal classes of $\End(E)$ is not a group.

If $E$ is a supersingular elliptic curve defined over a prime field $\Fp$, the $\Fp$-part of its endomorphism ring, $\End_{\Fp}(E)$, is an imaginary quadratic order. In this case, we have a true action of the class group of ideals: the invertible 
ideals of the imaginary quadratic order $\End_{\Fp}(E)$ act freely on the set of supersingular elliptic curves with endomorphism ring isomorphic to $\End_{\Fp}(E)$. 

The $\Fpbar$-automorphism groups of supersingular elliptic curves are generally $\{[\pm1]\}$. For precisely two $\Fpbar$-isomorphism classes of elliptic curves are these automorphism groups larger: $j = 0, 1728$~\cite{Silverman}. Curves $E$ with $j(E) = 0$ or $1728$ are said to have \emph{extra automorphisms}.

\subsection{\texorpdfstring{$\ell$-isogeny graphs}{The l-isogeny graphs}}\label{ssec:l_isogenygraph}

\begin{definition}[{\texorpdfstring{$\ell$-isogeny graph}{l-isogeny graph}}]\label{def:lisogenygraph}
    For two primes \(p\) and \(\ell\), the $\ell$-isogeny graph, denoted $\mathcal{G}(p,\ell)$ is the graph with vertices given by isomorphism classes of supersingular elliptic curves over $\overline{\mathbb{F}}_p$, with edges given by degree-$\ell$ isogenies, up to post-composing with an automorphism.
\end{definition}

\emph{A priori}, an isogeny has a unique dual so we should be able to identify the edge corresponding to an isogeny with the edge corresponding to its dual and create an undirected graph. However, the definition of the edges in Definition~\ref{def:lisogenygraph} opens the door to some issues for vertices with automorphism groups larger than $[\pm1]$: if $\varphi_1:E_{1728}\to E$ and $\varphi_2:E_{1728}\to E$ have the same domain with $j$-invariant 1728 and the same codomain, it may happen that $\widehat{\varphi}_1 = [i]\circ\widehat{\varphi}_2$. 
In this case, there are two edges from the vertex $E_{1728}$ to  the vertex $E$, but only one edge from $E$ to $E_{1728}$. 
If we wish to create an undirected graph, we sacrifice the regularity of the graph. The issue can only arise adjacent to vertices with $j=0$ or $j=1728$, so given a fixed $\ell$, this problem can be avoided completely by choosing $p$ according to certain congruence conditions. 

If we associate to each edge an isogeny with explicit
rational equations, then a cycle in the \(\ell\)-isogeny graph can be composed into an endomorphism. However, if we only specify kernels and one of the vertices in the cycle is an isomorphism class with extra automorphisms, there is no canonical way of identifying this cycle with an endomorphism: composing with an extra automorphism may completely change the discriminant of the endomorphism obtained (where the discriminant of an endomorphism is defined as in \cite[Section 2C]{Eisentraeger20}, via the Deuring correspondence \cite{Deuring41}). There are finitely many extra automorphisms, so the set of possibilities is finite - the issue is identifying them. To handle this issue, the notion of \emph{arbitrary assignment} was developed in \cite{Arpin2022OrientationsAC}.

\begin{definition}[Arbitrary assignment, {\cite[Definition 3.3]{Arpin2022OrientationsAC}}]\label{def:arb_assign}
Given an $\ell$-isogeny graph $\mathcal{G}(p,\ell)$, an arbitrary assignment is a choice (up to sign) of equivalence class representative $\pm\varphi$ for every edge $[\varphi]$. 
\end{definition}

For edges yielding isogenies where the codomain $j$-invariants are not equal to 0 or 1728, an arbitrary assignment is automatic and does not involve any choice. When $p\equiv1\pmod{12}$, such an assignment is not necessary as $[-1]$ commutes with every isogeny, so this can always be a post-composition, which is already an equivalence we place on edges.

\subsection{\texorpdfstring{$L$-isogeny graphs}{L-isogeny graphs}}\label{ssec:bigLisogenygraphs}

\begin{notation}\label{notation}
    Let $p,\ell_1, \dots, \ell_r$ be distinct primes.
    Let $L:=\{\ell_1,\dots,\ell_r\}$. For convenience and without loss of generality, suppose $\ell_1<\ell_2<\cdots<\ell_{r}$. In what follows, $e_i\in\mathbb{Z}_{\geq0}$ for all $i=1,\dots,r$.
\end{notation}

In \cite{wissam2022loops}, the author studies two isogeny graphs $\mathcal{G}(p,\ell_1), \mathcal{G}(p,\ell_2)$  simultaneously. We now consider a generalisation to an arbitrary number of isogeny graphs $\mathcal{G}(p,\ell_i)$, for $i=1,\dots, r$.

\begin{definition}[$L$-isogeny graph]\label{def:Lisogenygraph}
    The $L$-isogeny graph, denoted $\mathcal{G}(p,L)$ is the graph with vertices given by isomorphism classes of supersingular elliptic curves over $\Bar{\mathbb{F}}_p$, with edges given by degree $\ell_i$-isogenies (up to post-composing with an automorphism) for $i = 1,\ldots,r$. 
\end{definition}

As defined, $L$-isogeny graphs are directed graphs. As in the $\ell$-isogeny case described in Section~\ref{ssec:l_isogenygraph}, they are undirected for appropriate congruence conditions on $p$.
See Figure~\ref{fig:examplecounting_2} for the $\{2,3\}$-isogeny graph of supersingular elliptic curves over $\overline{\mathbb{F}}_{61}$.

\subsection{Brandt matrices} \label{Brandtpremiliminaires}
Let $\mathcal{B}_p$ denote the (unique up to isomorphism) quaternion algebra over $\mathbb{Q}$ ramified at $p$ and $\infty$.  Two left ideals $I, J$ of an order in $\mathcal{B}_p$ are \textit{equivalent} if there exists $\beta \in \mathcal{B}_p^{\times}$ such that $J=I\beta$. 
	The class set $cl(\mathfrak{O}) = \{ I_1, I_2, \dots,I_n\}$, for a maximal order $\mathfrak{O}$ in $\mathcal{B}_p$, is the set of representatives of equivalence classes of left $\mathfrak{O}$-ideals. We let $I_1=\mathfrak{O}$ by convention.
	The class group is finite and its cardinality $n$ is called the \textit{class number} of $\mathcal{B}_p$ (it is the same for every maximal order $\mathfrak{O}$ in $\mathcal{B}_p$). 
Let $O_R(I)$ denote the right order of the ideal $I$. The set $\Gamma_i := O_R(I_i)^{\times}/\mathbb{Z}^{\times}$ is finite, as it is a discrete subgroup of the compact Lie group $(\mathcal{B}_p \otimes \mathbb{R})^{\times}/\mathbb{R}^{\times} \cong \text{SO}_3(\mathbb{R})$. Let $w_i$ be its cardinality. See \cite{VoightQuaternionAlgebras}.

	Following \cite{gross1987heights}, we now introduce theta series and Brandt matrices. 
	For an ideal \(I_i\), define the inverse ideal \(I_i^{-1}\) as $I_{i}^{-1} := \{ \beta \in \mathcal{B}_p : I_{i}\beta I_{i} \subseteq I_{i} \}$, and let $M_{ij}:= I_j^{-1}I_i = \{ \sum_{k=1}^N a_k b_k : N \in \mathbb{Z}_{\geq 1},  a_k \in  I_j^{-1}, b_k \in I_i \}$. 
	We define the reduced norm $\text{Nm}(M_{ij})$ of $M_{ij}$ to be the unique positive rational number such that all quotients $\text{Nm}(a)/\text{Nm}(M_{ij})$, for $a \in M_{ij}$, form a coprime set of integers. 
    We now define the following \textit{theta series} $\theta_{ij}$:
		\[\theta_{ij}(\tau) := \frac{1}{2 w_j} \sum_{a \in M_{ij}} 
		q^{\frac{\text{Nm}(a)}{\text{Nm}(M_{ij})}}
						= \sum_{m \ge 0} B_{ij}(m)q^{m}, \]	
	where $q:= e^{2 \pi i \tau}$ and $\tau \in \{z \in \mathbb{C} : \text{Im}(z) >0\}$.
    Finally, we define the \textit{Brandt matrix} $B(m)$ to be $B(m):= \begin{bmatrix} B_{ij}(m) \end{bmatrix}_{1 \le i,j \le n}$, for $m \in \mathbb{Z}_{\geq 1}$. Note that $B(1)$ is the identity matrix. 
	We have the following properties for Brandt matrices. 
    \begin{proposition}[{\cite[Proposition 2.7]{gross1987heights}}] \label{BrandtProps}\
	\begin{enumerate}[label=(\roman*)]
    
		\item
		If $m \ge 1$, then $B_{ij}(m)\in \mathbb{Z}_{\geq0}$, and for all $1 \le i \le n$,
			\[\sum_{j=1}^n B_{ij}(m) = \sum_{\substack{d | m \\\gcd(d, p)=1}} d. \]
		\item
		If $m$ and $m'$ are coprime, then $B(mm')= B(m)B(m')$. 
		\item
		If $\ell \neq p$ is a prime, then 
            \begin{equation}
            \label{recursiveBrandtitem}
            B(\ell^{k}) = B(\ell^{k-1}) B(\ell) - \ell B(\ell^{k-2}), 
            \end{equation}    
            for all $k \ge 2$. 
		\item
		$w_j B_{ij}(m) = w_i B_{ji}(m)$ for all $m$ and for all $1 \leq i,j \leq n$. 
	\end{enumerate} 
\end{proposition}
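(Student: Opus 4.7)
The plan is to reinterpret the entries $B_{ij}(m)$ combinatorially as counts of integral ideals in $\mathcal{B}_p$, from which all four properties follow. First I would identify the orbits of $M_{ij}$ under the right action of $O_R(I_j)^\times$: each orbit of an element $a \in M_{ij}$ with $\mathrm{Nm}(a)/\mathrm{Nm}(M_{ij}) = m$ corresponds to a principal ideal, and this identifies $B_{ij}(m)$ with the number of integral ideals contained in $M_{ij}$ of the appropriate reduced norm, lying in a specific class. The normalizing factor $2w_j$ in the definition of $\theta_{ij}$ is precisely $\#O_R(I_j)^\times$ (since $\mathbb{Z}^\times = \{\pm 1\}$), so the orbit count is a non-negative integer, which gives the first half of (i).

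For the row-sum identity, summing over $j$ ranges over all possible target classes, so the total counts integral ideals of reduced norm $m$ on one side of a fixed maximal order. This count is computed locally. At each prime $q \ne p$, $\mathcal{B}_p \otimes \mathbb{Q}_q \cong M_2(\mathbb{Q}_q)$ and a maximal order is isomorphic to $M_2(\mathbb{Z}_q)$, giving $\sigma_1(q^k) = 1 + q + \cdots + q^k$ integral ideals of reduced norm $q^k$. At the ramified prime $p$, a maximal order has a unique two-sided maximal ideal and hence only one integral ideal at each power of $p$. Combining multiplicatively over primes yields exactly $\sum_{d \mid m,\, \gcd(d,p)=1} d$, completing (i).

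Parts (ii) and (iii) are Hecke algebra relations transported through the ideal dictionary. For (ii), when $\gcd(m,m')=1$ every integral ideal of reduced norm $mm'$ factors uniquely as a composition of an ideal of norm $m$ with one of norm $m'$ (by coprime factorisation of ideal norms); summing over the intermediate class $k$ yields $B_{ij}(mm') = \sum_k B_{ik}(m) B_{kj}(m')$, i.e.\ $B(mm') = B(m)B(m')$. For (iii), I would apply the local Hecke algebra relation at $\ell \ne p$: the matrix product $B(\ell^{k-1}) B(\ell)$ counts pairs of ideals of norms $\ell^{k-1}$ and $\ell$ whose compositions have norm $\ell^k$, of which the primitive compositions are enumerated by $B(\ell^k)$; the non-primitive compositions (those divisible by the scalar $\ell$) correspond to norm-$\ell^{k-2}$ ideals counted with multiplicity $\ell$, so that rearranging produces the stated three-term recurrence.

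Finally, for (iv), I would apply the canonical involution $\alpha \mapsto \overline{\alpha}$ of $\mathcal{B}_p$, which preserves reduced norms and swaps left with right multiplication. Applied to $M_{ij} = I_j^{-1} I_i$, it produces a bijection with $M_{ji} = I_i^{-1} I_j$ (up to a scalar that cancels by $\mathrm{Nm}(M_{ij}) = \mathrm{Nm}(M_{ji})$) that preserves normalised reduced norms. The two theta series $\theta_{ij}$ and $\theta_{ji}$ then have equal sums but differing normalisers $1/(2w_j)$ versus $1/(2w_i)$, which rearranges to $w_j B_{ij}(m) = w_i B_{ji}(m)$. The principal obstacle throughout is the bookkeeping around $\mathrm{Nm}(M_{ij})$, the left/right distinction for $O_L(I_i)$ and $O_R(I_j)$, and the unit-group normalisations; none of the individual identities is deep, but keeping all of these straight simultaneously is where the care lies.
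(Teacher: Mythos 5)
The paper does not prove this proposition at all: it is quoted directly from Gross (Proposition 2.7 of \emph{Heights and the special values of $L$-series}), so there is no in-paper argument to compare yours against. Your sketch reconstructs the standard Eichler--Gross argument and is correct in outline: integrality and the row sum follow from the free norm-preserving action of a unit group of order $2w_j$ together with a local count of integral ideals ($\sigma_1(q^k)$ ideals of reduced norm $q^k$ at each split prime $q\neq p$, and a unique ideal $P^k$ at the ramified prime $p$); (ii) and (iii) are the usual coprime-factorisation and chain-counting (Hecke) arguments; and (iv) follows from the canonical involution, which carries $M_{ij}=I_j^{-1}I_i$ to a positive rational multiple of $M_{ji}$ and preserves normalised norms, so the two element counts agree and only the normalisations $2w_j$ versus $2w_i$ differ.

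Two bookkeeping points you should repair. First, the group of order $2w_j$ is $O_R(I_j)^\times$, which is the \emph{left} order of $M_{ij}=I_j^{-1}I_i$ (the left order of $I_j^{-1}$ is $O_R(I_j)$), so it acts by left multiplication, not by a right action as you wrote; the freeness of the action uses that $\mathcal{B}_p$ is a division algebra. Second, in (iii) the multiplicity count is slightly garbled: an imprimitive ideal of reduced norm $\ell^k$ (one of the form $\ell J'$ with $\mathrm{Nm}(J')=\ell^{k-2}$) lies on exactly $\ell+1$ chains, since the relevant quotient is non-cyclic and has $\ell+1$ subgroups of order $\ell$, while a primitive ideal lies on exactly one. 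The correct accounting is therefore $B(\ell^{k-1})B(\ell)=P(\ell^k)+(\ell+1)B(\ell^{k-2})=B(\ell^k)+\ell B(\ell^{k-2})$, where $P(\ell^k)$ counts primitive ideals and $B(\ell^k)=P(\ell^k)+B(\ell^{k-2})$; your phrase ``primitive compositions are enumerated by $B(\ell^k)$'' conflates the primitive count with the total count, even though the three-term recurrence you arrive at is the right one.
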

The main reason for considering Brandt matrices in our study of isogeny graphs is the fact that $B_{ij}(m)$ is the number of equivalence classes of isogenies of degree $m$ between the elliptic curves corresponding to $I_i$ and $I_j$ (cf.~\cite[Proposition 2.3]{gross1987heights}). Therefore, the Brandt matrix $B(\ell)$ is the adjacency matrix of the supersingular isogeny graph $\mathcal{G}(p,\ell)$. Hence, in order to study cycles of length $r$ in the graph $\mathcal{G}(p,\ell)$, one can alternatively look at the entries $B(\ell^r)$ to count those closed walks based at $I_i$, or at $\Tr(B(\ell^r))$ to count them all (counting all starting points). See \cite{wissam2022loops, GKPV_Collisions} for more details on this approach, in the case of two primes. 

    Now, computing the entries of $B(\ell^r)$ can be done using the recursion relation of~\Cref{recursiveBrandtitem}, but this can be very inefficient for larger powers of $\ell$ or large primes $p$. However, there is a way of computing certain traces of Brandt matrices, by re-expressing them in terms of modified Hurwitz class numbers, which we define below. 

	Given an order $\mathcal{O}$ in an imaginary quadratic field, let $d$ 
	be its (negative) discriminant, $h_\mathcal{O}$ the size of the class group $cl(\mathcal{O})$, and $u_\mathcal{O} := | \mathcal{O}^{\times}/\mathbb{Z}^{\times}|$.  
	To emphasize the discriminant $d$ of an order, we will denote the order $\mathcal{O}_d$. Fix $D>0$ for which there exists an order $\mathcal{O}_{-D}$ of discriminant $-D$. The \emph{Hurwitz Class Number} $H(D)$ is 
		\begin{equation*}
			H(D) = \sum_{\substack{d \in \mathbb{Z},\mathfrak{f} >0  \\ d \cdot \mathfrak{f}^{2} =-D}} \frac{h_{\mathcal{O}_d}}{u_{\mathcal{O}_d}}.
		\end{equation*} 
	and the \emph{modified Hurwitz Class Number} $H_p(D)$, for a prime $p$, is 
			\begin{equation}
			\label{HCNDef} 
			H_p(D):= \left\{
						\begin{array}{ll}
							0  & \mbox{if } p \text{ splits in } \mathcal{O}_{-D};  \\[3pt]
							H(D)  & \mbox{if } p \text{ is inert in } \mathcal{O}_{-D}; \\[3pt] 							\frac{1}{2}H(D)  & \arraycolsep0pt\begin{array}{l}\mbox{if } p \text{ is ramified in } \mathcal{O}_{-D} \\ 
									 \text{but does not divide the conductor of } \mathcal{O}_{-D};\end{array} \\[
9pt]							H(\frac{D}{p^{2}})  & \mbox{if } p \text{ divides the conductor of } \mathcal{O}_{-D} 
						\end{array}
					\right. 	
			\end{equation} 
	For $D=0$, we set $H(0) = -1/12$ and $H_p(0) := \frac{p-1}{24}$. 
Note that $H_p(D) \le H(D)$. 
We conclude this section with the following two results on Hurwitz class numbers. The first connects Hurwitz class numbers to the trace of Brandt matrices. The second gives us a way to compute certain sums of Hurwitz class numbers. 

\begin{theorem}[{\cite[Prop. 1.9]{gross1987heights}}]\label{GrossBrandtToHurwitz}

	For all integers $m \ge 0$, 
			\begin{equation*} 
				\Tr(B(m)) = \sum_{\substack{s \in \mathbb{Z} \\ s^2 \le 4 m}} H_p (4 m - s^2).
			\end{equation*} 
\end{theorem}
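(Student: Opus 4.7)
The plan is to unpack $\Tr(B(m))$ as a weighted count of elements of reduced norm $m$ in the quaternion right orders, stratify that count by the trace, and then invoke a mass formula for embeddings of imaginary quadratic orders into maximal orders of $\mathcal{B}_p$ to identify each trace-stratum with a term $H_p(4m-s^2)$.

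First I would open up the definition of the diagonal theta series. Since $I_i^{-1} I_i = O_R(I_i) =: \mathfrak{O}_i$ and this right order contains $1$ (so its reduced norm is $1$), the formula for $\theta_{ii}$ collapses to
\[
\theta_{ii}(\tau) = \frac{1}{2w_i} \sum_{\alpha \in \mathfrak{O}_i} q^{\mathrm{Nm}(\alpha)},
\qquad
B_{ii}(m) = \frac{1}{2w_i}\,\bigl|\{\alpha \in \mathfrak{O}_i : \mathrm{Nm}(\alpha)=m\}\bigr|.
\]
Summing over $i$ and partitioning by the reduced trace $s = \Tr(\alpha)$ (which, since $\mathcal{B}_p\otimes\mathbb{R}$ is the Hamilton quaternions, forces $\alpha^2 - s\alpha + m = 0$ with $s^2 \le 4m$) gives
\[
\Tr(B(m))
= \sum_{\substack{s\in\mathbb{Z}\\ s^2\le 4m}} \sum_{i=1}^n \frac{1}{2w_i}\bigl|\{\alpha\in\mathfrak{O}_i : \Tr(\alpha)=s,\ \mathrm{Nm}(\alpha)=m\}\bigr|.
\]
It therefore suffices to show, for each admissible $s$, that the inner double sum equals $H_p(4m-s^2)$.

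For $s^2 < 4m$, set $D = 4m - s^2 > 0$. An element $\alpha \in \mathfrak{O}_i$ with $(\Tr,\mathrm{Nm}) = (s,m)$ is exactly the image of $X$ under a ring embedding $\mathbb{Z}[X]/(X^2-sX+m)\hookrightarrow \mathfrak{O}_i$, and $\mathbb{Z}[\alpha]$ is an imaginary quadratic order of discriminant $-D$. Every such embedding extends uniquely to an \emph{optimal} embedding of the containing quadratic order $\mathcal{O}' := \mathbb{Q}(\alpha)\cap\mathfrak{O}_i$, whose discriminant is $-D/\mathfrak{f}^2$ for some divisor $\mathfrak{f}$ of the conductor of $\mathbb{Z}[\alpha]$. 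Thus
\[
|\{\alpha\in\mathfrak{O}_i : \Tr\alpha=s,\ \mathrm{Nm}\alpha=m\}|
= \sum_{\mathcal{O}'\supseteq \mathbb{Z}[\alpha]} m(\mathcal{O}',\mathfrak{O}_i),
\]
where $m(\mathcal{O}',\mathfrak{O}_i)$ counts optimal embeddings. I would now invoke the Eichler mass formula for optimal embeddings of an imaginary quadratic order of discriminant $d$ into maximal orders of $\mathcal{B}_p$: summed over the class set with weights $1/(2w_i)$, it equals $h(d)/u(d)$ multiplied by a local factor at $p$ which is $0$, $1$, $\tfrac12$, or the value obtained from the suborder of conductor $1/p$, according as $p$ is split, inert, ramified (and not dividing the conductor), or divides the conductor in $\mathcal{O}'$. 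Reindexing the double sum over $\mathcal{O}'\supseteq \mathbb{Z}[\alpha]$ as a sum over factorisations $d\cdot \mathfrak{f}^2 = -D$ with $d$ a fundamental discriminant exactly reproduces the defining expression for $H_p(D)$ given in \eqref{HCNDef}.

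The boundary case $s^2 = 4m$ (so $m$ is a square and $s$ even, with $\alpha = s/2 \in \mathbb{Z}$) contributes $\sum_i 1/(2w_i)$, which by the classical Eichler mass formula equals $(p-1)/24 = H_p(0)$, matching the convention in the excerpt. The main obstacle is the third paragraph: justifying the optimal embedding mass formula with its correct local behaviour at the ramified prime $p$, since it is precisely the four cases of $p$ in $\mathcal{O}_{-D}$ that are packaged into the piecewise definition of $H_p$. Once that local computation is in hand, the remainder is bookkeeping: summing the mass contributions over $\mathcal{O}'\supseteq\mathbb{Z}[\alpha]$ collapses to $H_p(4m-s^2)$, and summing over $s$ yields the claimed identity.
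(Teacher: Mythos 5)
The paper itself gives no proof of this statement --- it is imported verbatim from Gross \cite[Prop.~1.9]{gross1987heights} --- so the only question is whether your sketch is sound. Its overall architecture is the standard (indeed Gross's) route and is fine: $B_{ii}(m)=\frac{1}{2w_i}\#\{\alpha\in O_R(I_i):\mathrm{Nm}(\alpha)=m\}$, stratification by the reduced trace $s$ with $s^2\le 4m$ by definiteness, the identification of elements with optimal embeddings of the orders $\mathcal{O}'\supseteq\mathbb{Z}[\alpha]$, the weight bookkeeping (every $\Gamma_i$-orbit of optimal embeddings of $\mathcal{O}'$ has stabiliser of order $u(d')$, so the $1/(2w_i)$-weighted element count becomes $\tfrac{1}{2u(d')}\sum_i m(\mathcal{O}',\mathfrak{O}_i,\mathfrak{O}_i^\times)$), and the boundary case $s^2=4m$ via Eichler's mass formula $\sum_i 1/w_i=(p-1)/12$ are all correct.

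The genuine gap is exactly the step you flag, and your description of it is not just unproved but wrong as stated. For a \emph{fixed} order $\mathcal{O}'$ with $p\mid\mathrm{cond}(\mathcal{O}')$, the local optimal embedding number at $p$ into the maximal order of the division algebra $\mathcal{B}_p\otimes\mathbb{Q}_p$ is $0$; there is no local factor equal to ``the value obtained from the suborder of conductor $1/p$'' (that phrase does not define anything). The fourth case of $H_p$ does not arise from the single-order mass formula at all: it arises from the outer sum over $\mathcal{O}'\supseteq\mathbb{Z}[\alpha]$, in which every order with $p\mid\mathrm{cond}(\mathcal{O}')$ contributes $0$, so the sum collapses to the orders of conductor prime to $p$; re-indexing those reproduces the $D/p^2$ term, understood \emph{recursively} (if $p^2$ still divides the conductor one recurses again, and if $p$ splits in the field the total is $0$ even though $H(D/p^2)\neq 0$). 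So to complete the argument you need (i) the local embedding numbers at $p$, namely $0,\,2,\,1,\,0$ in the split, inert, ramified-with-$p\nmid f$, and $p\mid f$ cases, and (ii) this collapse-and-reindex step in place of a ``local factor'' for the fourth case. Two smaller slips: in the reindexing, $d$ must run over \emph{all} negative order discriminants with $d\mathfrak{f}^2=-D$, not only fundamental ones, to match the definition of $H(D)$; and $I_i^{-1}I_i=O_R(I_i)$ holds for the standard inverse $I^{-1}=\overline{I}/\mathrm{Nm}(I)$, not for the set $\{\beta:\beta I\subseteq I\}$ written in the preliminaries, which is the left order.
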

\begin{theorem}[{\cite[Section 7]{hurwitz1885ueber}}] \label{GrossHurwitzSum}
	For all integers $m \ge 1$, 
	\begin{equation*} 
		\sum_{\substack{s \in \mathbb{Z} \\ s^2 \le 4 m}} H(4m - s^{2} ) = 2 \sum_{\substack{d | m \\ d > 0}} d - \sum_{\substack{d | m \\ d> 0}} \min\{d,m/d\}. 
	\end{equation*} 
\end{theorem}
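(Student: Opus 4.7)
The result is a classical identity of Hurwitz--Kronecker type, and the cleanest modern approach is via the Eichler--Selberg trace formula. The plan is to apply the trace formula to the Hecke operator $T_m$ acting on the space $S_2(SL_2(\ZZ))$ of cusp forms of weight $2$ and level $1$. This space is zero-dimensional, so $\Tr(T_m \mid S_2) = 0$. Expanding the trace as a sum over elliptic, hyperbolic, parabolic, and identity contributions, the elliptic term is proportional to $\sum_{s^2 < 4m} H(4m-s^2)$, the hyperbolic term is proportional to $\sum_{d \mid m} \min(d, m/d)$, and the remaining terms combine (with appropriate correction from the Eisenstein subspace, since the trace formula most naturally computes the trace on all of $M_2(SL_2(\ZZ))$) into $\sigma_1(m) = \sum_{d \mid m} d$. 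Setting the total to zero and rearranging yields the claimed identity, with the boundary case $s^2 = 4m$ absorbed by the convention $H(0) = -1/12$.

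Alternatively, one can follow Hurwitz's original strategy and give a direct combinatorial argument: interpret both sides as weighted counts of $SL_2(\ZZ)$-conjugacy classes of matrices $M \in M_2(\ZZ)$ with $\det(M) = m$, stratified by trace. For each $s$ with $s^2 < 4m$, matrices of trace $s$ generate orders in the imaginary quadratic field $\QQ(\sqrt{s^2 - 4m})$, and by the Latimer--MacDuffee correspondence their conjugacy classes biject with invertible ideal classes of such orders, weighted by $1/u(d)$ from their unit groups to yield exactly $H(4m - s^2)$. On the other hand, each $SL_2(\ZZ)$-orbit on determinant-$m$ matrices contains Hermite normal form representatives $\bigl(\begin{smallmatrix} a & b \\ 0 & d \end{smallmatrix}\bigr)$ with $ad = m$, $a, d > 0$, $0 \le b < d$; the total count of such triples is $\sigma_1(m)$ summed appropriately, but $SL_2(\ZZ)$-conjugation identifies distinct Hermite representatives, and tracking these identifications produces the correction term $\sum_{d \mid m} \min(d, m/d)$ alongside a second copy of $\sigma_1(m)$.

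The main obstacle in either approach is careful bookkeeping. In the trace-formula route, one must correctly extract the geometric side for trivial character and weight exactly $2$, where the Eisenstein contribution is delicate. In the combinatorial route, the technical heart is the Latimer--MacDuffee step together with the orbit count on upper triangular matrices; the parabolic case $s^2 = 4m$ and the extra automorphisms at discriminants $-3, -4$ must be handled so that the $1/u(d)$ weights built into $H(D)$ match the automorphism group of the corresponding matrix $M$. Once these two weightings are shown to agree term-by-term, the identity follows immediately from equating the two expressions for the same weighted conjugacy-class count.
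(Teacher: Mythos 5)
The paper itself offers no proof of this statement: it is imported as a classical identity with a citation to Hurwitz, so your proposal has to stand on its own rather than be measured against an internal argument. Your primary route does stand: the identity is exactly the weight-$2$, level-$1$ case of the Eichler--Selberg trace formula. Since $S_2(\mathrm{SL}_2(\ZZ))=0$, the vanishing of $\Tr(T_m)$ against the geometric side (with $P_2(t,m)=1$) gives $\sum_{s^2\le 4m}H(4m-s^2)+\sum_{d\mid m}\min\{d,m/d\}=2\sigma_1(m)$, which is the stated relation; a check at $m=1$ gives $H(4)+2H(3)+2H(0)=\tfrac12+\tfrac23-\tfrac16=1=2\sigma_1(1)-1$. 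You also correctly identify the one genuine subtlety: weight $2$ lies outside the range where the naive trace formula applies, so one must use the version with the quasi-modular Eisenstein correction (Hecke's trick, or Zagier's treatment), and that correction is precisely where the $2\sigma_1(m)$ term and the convention $H(0)=-\tfrac1{12}$ get absorbed. With that reference supplied, the argument is complete and is the standard modern derivation of this Kronecker--Hurwitz relation.

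Your backup combinatorial route, as written, has a real gap. A weighted count of \emph{all} $\mathrm{SL}_2(\ZZ)$-conjugacy classes of integral matrices of determinant $m$ stratified by trace does not close up into the identity: every integer $s$ occurs as a trace (companion matrices), the classes with $s^2-4m>0$ non-square are governed by real quadratic class numbers and appear on neither side, and the Hermite-normal-form bookkeeping you sketch does not by itself produce $2\sigma_1(m)-\sum_{d\mid m}\min\{d,m/d\}$. The classical argument in this spirit counts something more restrictive --- fixed points of the Hecke correspondence $T_m$ on the modular curve (equivalently cyclic sublattices or the modular equation $\Phi_m(X,X)$) --- where only $s^2\le 4m$ contributes CM points weighted by Hurwitz class numbers, and the divisor term arises from a separate analysis at the cusp; Latimer--MacDuffee alone does not supply that cusp contribution. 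Since your trace-formula route already proves the theorem, this is a caveat about the alternative rather than a flaw in the proposal as a whole.
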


\subsection{Matrix representations of the graph}

Let $G = (V,E)$ be a graph. Fix an ordering of the vertices $V = (v_1,\dots,v_n)$. The \emph{adjacency matrix} of $G$ with respect to the ordering $(v_1,\dots,v_n)$ is the $n\times n$ matrix $A$ whose entries $a_{ij}$ are the number of edges from $v_i$ to $v_j$. If the graph is undirected, the adjacency matrix is symmetric. If the graph is directed, this may or may not be the case. 

For an undirected graph, the degree of a vertex is the number of edges incident to that vertex. If the graph is directed, we define the in- and out-degree of a vertex to be the number of in- and out-edges, respectively. Given an ordering of the vertices $V = (v_1,\dots,v_n)$, the (in- or out-)degree matrix is the diagonal matrix $D$ whose nonzero entries $d_{ii}$ are the (in- or out-)degrees of the vertices $v_i$.

The Laplacian of an undirected graph is the matrix $L:= D-A$. If $G$ is directed, we can define the in- or out-Laplacian where $D$ is the in- or out-degree matrix, respectively.

\subsection{Edge Expansion}\label{prelim_fielder}

A \emph{cut} of a graph \(G = (V,E)\) is an ordered partition of the vertices \(V = V_1 \sqcup V_2\) with \(V_1, V_2 \neq \varnothing\). The sets \(V_1\) and \(V_2\) are called the \emph{sides} of the cut.
The directed edges \( e = (u, v) \in E\) which have \(u \in V_1\) and \(v \in V_2\) (or vice versa),
and undirected edges with one endpoint in each of \(V_1\) and \(V_2\),
are said to \emph{cross} the cut,
and the set of such edges is called the \emph{cut set} of the cut, denoted \(E(V_1, V_2)\). The cuts which are usually of most interest in graph theory are those which are relatively balanced (i.e., in which \(|V_1| \approx |V_2| \)) and relatively \emph{sparse} (i.e., they have relatively few edges which cross the cut). These desirable properties are incorporated in the \emph{edge expansion}, defined in Definition~\ref{def:EdgeExpansion}.

\begin{definition}[Edge expansion]
\label{def:EdgeExpansion}
Let \(G = (V,E)\) be a graph. The \emph{edge expansion} of a cut \(V = C \sqcup (V \setminus C)\) is
\[
h_G(C) := \frac{|E(C, V \setminus C)|}{\min\{ \Vol(C), \Vol(V \setminus C) \}},
\]
where $\Vol(C)$ is the sum of the vertex degrees in $C$, which means for $d$-regular graphs, $\Vol(C) = d \cdot |C|$.
The edge expansion of \(G\) is \(h(G) := \min\limits_{\varnothing \subsetneq T \subsetneq V} h_G(T)\).
\end{definition}

\section{\texorpdfstring{Cycles in the $L$-isogeny graph}{Cycles in the L-isogeny graph}}\label{sec:cycles} 
Cycles in isogeny graphs are of interest for two main reasons:
\begin{enumerate} 
    \item They correspond to collisions of walks in the isogeny graph \cite{CGL09};
    \item They correspond to endomorphisms of elliptic curves and can be used to compute endomorphism rings \cite{bank2019cycles,Eisentraeger20,fuselier2023,Kambe24}.
\end{enumerate}

In this section and the following section, we restrict ourselves to the case $p\equiv1\pmod{12}$, which allows us to treat $\mathcal{G}(p,\ell)$ as an undirected graph by identifying each isogeny with its dual, 
thus avoiding counting issues arising from extra automorphisms. Moreover, we assume that $L$ never contains $p$.

\subsection{Setup} 

\begin{definition}[Isogeny cycle]\label{def:isogenycycle}
    An $\{\ell_1^{e_1},\ldots,\ell_r^{e_r}\}$-isogeny cycle\footnote{We note that this is different from a cycle in the typical graph-theoretic sense, which is a closed \emph{path}---i.e., a closed walk with no repeated vertices.} is a closed walk in $\mathcal{G}(p,\{\ell_1,\dots,\ell_r\})$ 
    where the set of edges in the walk consists of $e_i$ degree-$\ell_i$ isogenies for $i = 1,\dots,r$, with $e_i>0$.
    We denote the set of all $\{\ell_1^{e_1},\ldots,\ell_r^{e_r}\}$-isogeny cycles in $\mathcal{G}(p,\{\ell_1,\dots,\ell_r\})$ by $\operatorname{Cycles}_p(\ell_1^{e_1},\ldots,\ell_r^{e_r})$. 
\end{definition}

\begin{remark}
    It is possible to discuss $\{\ell_i^{e_i}\}_{i=1}^r$-isogeny cycles in an $L$-isogeny graph $\mathcal{G}(p,L)$ where $\{\ell_1,\dots,\ell_r\}\subsetneq L$, but since the cycle does not contain edges of degree $\ell$ for any $\ell\in L\setminus \{\ell_1,\dots,\ell_r\}$, we may simply consider this isogeny cycle as belonging to the graph $\mathcal{G}(p,\{\ell_1,\dots,\ell_r\})$. Going forward, when we discuss $\{\ell_1^{e_1},\dots,\ell_r^{e_r}\}$-isogeny cycles, we automatically define $L:=\{\ell_1,\dots,\ell_r\}$.
\end{remark}

Given a walk (resp. closed walk) in $\mathcal{G}(p, L)$ as a sequence of edges, we may successively compose compatible isogeny representatives corresponding to these edges to obtain an isogeny (resp. endomorphism) of elliptic curves. Each vertex represents an isomorphism class of elliptic curves, so the representative of each edge source is successively chosen in a compatible way.\footnote{This is possible as no additional automorphisms exist, so pre- and  post-composition of an isogeny with automorphisms will always correspond to the same edge.} The reverse process involves decomposing an isogeny into a sequence of prime degree isogenies.

\begin{definition}\label{def:decomposition}
    Let $\varphi : E \to E'$ be a separable isogeny of degree divisible only by primes contained in $L$. A (prime) decomposition of $\varphi$ is a factorisation of $\varphi$ into a sequence of (prime degree) isogenies
    \[
        \varphi = \varphi_k \circ \varphi_{k-1} \circ \cdots\circ \varphi_2 \circ \varphi_1 =: (\varphi_i)_{i=1}^k. 
    \]
    Define $\mathcal{D}(\varphi)$ to be the set of prime decompositions of $\varphi$.
\end{definition}

A prime decomposition yields a graphical representation of an isogeny as a walk in $\mathcal{G}(p, L)$, since each $\varphi_i$ corresponds to an edge. In the context of the discussion above, we treat prime decompositions as equivalent if they yield the same graph-theoretic walk. 
Since we are assuming
$p\equiv 1 \mod 12$, every elliptic curve has $\Aut(E) = \{\pm 1\}$, so two prime decompositions are equivalent when they differ by replacing $\varphi_i$ with $-\varphi_i$ for any subset of indices $i$. 
Even up to this equivalence, a prime decomposition need not be unique.

\begin{lemma}\label{lem:count_decomp}
    Let $\varphi : E \to E'$ be an isogeny with kernel $G$ such that $\deg\varphi$ is divisible only by primes contained in $L$. Let $(P, \triangleleft)$ be the poset of subgroups of $G$, ordered by inclusion. There is a 1-1 correspondence between the prime decompositions of $\varphi$ (elements of $\mathcal{D}(\varphi))$ and the maximal chains in $(P, \triangleleft)$.
\end{lemma}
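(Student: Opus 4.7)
The plan is to exhibit explicit maps in both directions between $\mathcal{D}(\varphi)$ and the set of maximal chains in $(P,\triangleleft)$, and to verify they are mutually inverse. Since $\deg\varphi$ is divisible only by primes in $L$, and $p \notin L$, the isogeny $\varphi$ is separable, so $G = \ker\varphi$ is a finite abelian group of order $\deg\varphi$, and the isogeny--subgroup correspondence (i.e. the Deuring-style kernel correspondence for separable isogenies) applies.

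Forward map. Given a prime decomposition $(\varphi_i)_{i=1}^{k} \in \mathcal{D}(\varphi)$, set
\[
G_i := \ker(\varphi_i \circ \varphi_{i-1} \circ \cdots \circ \varphi_1) \subseteq G, \qquad i = 0, 1, \dots, k,
\]
with the convention $G_0 = \{0\}$ and $G_k = \ker\varphi = G$. Because $\varphi_i$ is a non-trivial isogeny of prime degree $\ell$ (some $\ell \in L$), the quotient $G_i / G_{i-1}$ is cyclic of order $\ell$, so $G_{i-1} \subsetneq G_i$ and no subgroup of $G$ lies strictly between them. Hence $G_0 \subsetneq G_1 \subsetneq \cdots \subsetneq G_k$ is a maximal chain in $(P,\triangleleft)$.

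Backward map. Given a maximal chain $\{0\} = G_0 \subsetneq G_1 \subsetneq \cdots \subsetneq G_k = G$, maximality together with the subgroup correspondence forces each quotient $G_i / G_{i-1}$ to be a simple abelian group, hence cyclic of prime order $\ell_i \in L$. Let $E_i := E/G_i$ for $i = 0, 1, \dots, k$ (with $E_0 = E$ and $E_k \cong E'$), and let $\varphi_i : E_{i-1} \to E_i$ be the quotient isogeny, whose kernel is $G_i / G_{i-1}$ and whose degree is the prime $\ell_i$. The composition $\varphi_k \circ \cdots \circ \varphi_1$ has kernel $G$ and codomain $E_k \cong E'$; up to the equivalence on prime decompositions (treating two decompositions as equal when they yield the same graph-theoretic walk in $\mathcal{G}(p,L)$, which absorbs the choice of isomorphism representative at each intermediate vertex), this recovers $\varphi$, so $(\varphi_i)_{i=1}^k \in \mathcal{D}(\varphi)$.

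Mutual inverseness. Starting with a maximal chain, applying the backward map and then the forward map returns exactly $\ker(\varphi_i \circ \cdots \circ \varphi_1) = G_i$, so the chain is reproduced on the nose. Starting with a decomposition $(\varphi_i)_{i=1}^k$, producing the chain $(G_i)$ and then forming the quotient isogenies $E/G_{i-1} \to E/G_i$ yields at each step an isogeny with the same kernel $G_i / G_{i-1}$ as $\varphi_i$, hence the same edge in $\mathcal{G}(p,L)$; under the stated equivalence this is the original decomposition. The hardest step to formulate carefully is this last one, because it is exactly the place where one must invoke the equivalence on $\mathcal{D}(\varphi)$ (different choices of isomorphism $E/G_i \xrightarrow{\sim} $ an intermediate curve in the original decomposition change the $\varphi_i$ on the nose but not as edges). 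With the equivalence in hand, the verification is immediate, and the two constructions are mutually inverse bijections.
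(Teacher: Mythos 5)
Your proposal is correct and follows essentially the same route as the paper: the forward map sends a decomposition to the chain of kernels $G_i=\ker(\varphi_i\circ\cdots\circ\varphi_1)$ with maximality from the prime index $|G_i/G_{i-1}|=\deg\varphi_i$, and the backward map reconstructs the prime-degree isogenies from the quotients $G_i/G_{i-1}$ (the paper phrases this via Silverman III.4.11 to build $\psi_i:E\to E_i$ with $\ker\psi_i=G_i$ and factor $\psi_i=\varphi_i\circ\psi_{i-1}$, which is the same construction as your $E/G_i$ quotients), with inverseness holding up to the stated equivalence of decompositions. No gaps; your treatment of the equivalence absorbing isomorphism choices is, if anything, slightly more explicit than the paper's.
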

\begin{proof}
    Given a prime decomposition $\varphi = (\varphi_i)_{i=1}^k$, define the sequence of groups
    \[
        G_0 = \{O_E\}, G_{i} = \ker(\varphi_i \circ \varphi_{i-1}\circ \cdots \circ \varphi_1).
    \]
    This sequence forms a chain in $P$ as each $\varphi_i$ is a group homomorphism. Suppose there exists $H \in P$ such that $G_{i-1} \triangleleft H \triangleleft G_i$. Comparing indices, we have $\deg\varphi_i = |G_i/G_{i-1}| = |G_i/H||H/G_{i-1}|$, and as $\deg\varphi_i$ is prime, either $H = G_i$ or $H=G_{i-1}$. As $G_k = G$, we conclude that this chain is maximal.
    \par
    For the reverse direction, let $\{O_E\} = G_0 \triangleleft G_1 \triangleleft \cdots \triangleleft G_k = G$ be a maximal chain in $P$. For each $1\le i \le k$, there exists a unique curve $E_i$ (up to isomorphism) and isogeny $\psi_i : E \to E_i$ (up to sign) with $\ker \psi_i = G_i$ (\cite[Chapter III, Proposition 4.11]{Silverman}). Define $\varphi_1 = \psi_1$, and note that $\psi_k = \varphi$. For $i>1$, define $\varphi_i$ to be the unique isogeny satisfying $\psi_i = \varphi_i\circ \psi_{i-1}$ (\cite[Chapter III, Corollary 4.11]{Silverman}). This procedure yields a decomposition $\varphi = \psi_k = \varphi_k\circ \varphi_{k-1}\circ \cdots \circ \varphi_1$ where $\deg\varphi_i = |G_i/G_{i-1}|$. Each of these indices is prime as the $G_i$'s form a maximal chain. Finally, these correspondences are inverse of each other up to equivalence of decompositions.
\end{proof}

As isogenies have finite kernels, $\mathcal{D}(\varphi)$ is always a finite set. 
We can compute $|D(\varphi)|$ from the prime power decomposition of $\deg\varphi$ together with knowledge of the largest integer $n$ such that $\varphi(E[n]) = O_{E'}$. 
As a special case, suppose the isogeny $\varphi$ is primitive (\cite[Definition 4.1]{bank2019cycles}; equivalently, has cyclic kernel); then, we have $n=1$ and writing  $\deg\varphi = \prod_{i=1}^r\ell_i^{e_i}$ 
we have 
\begin{equation}\label{eq:Dvarphi}
    |\mathcal{D}(\varphi)| = \binom{\sum_{i=1}^r e_i}{e_1, e_2, \hdots, e_r} = \frac{(\sum_{i=1}^re_i)!}{\prod_{i=1}^re_i!}
\end{equation}
using a simple combinatorial argument~\cite[Section 5.3, Theorem 1]{tucker2007applied}. In particular, if the number of distinct primes dividing $\deg\varphi$ is greater than 1, then $|D(\varphi)|>1$: such isogenies do not have a unique prime decomposition. 

\subsection{Backtracking} 
A walk in the $\ell$-isogeny graph composes to an isogeny with non-cyclic kernel if and only if the walk contains backtracking~\cite[Definition 4.3]{bank2019cycles}. In particular, there is no canonical choice of walk in the $\ell$-isogeny graph corresponding to the multiplication-by-$\ell$ isogeny of a particular starting vertex. 

\begin{definition}[Backtracking]\label{def:backtracking}
    An isogeny cycle given by the composition  $\varphi_k\circ\varphi_{k-1}\circ\cdots\circ\varphi_1$ has \emph{backtracking} if $\widehat{\varphi}_i\in \Aut(E_i)\varphi_{i+1}$ for some $i\in\{1,2,\dots,k-1\}$ where $\varphi_i:E_i\to E_{i+1}$ and $\varphi_{i+1}:E_{i+1}\to E_{i+2}$.
\end{definition}

In $\mathcal{G}(p,\ell)$, a cycle has backtracking if and only if the endomorphism obtained by composing (compatible isogeny representatives of) edges in the cycle has non-cyclic kernel \cite[Proposition 4.5]{bank2019cycles}. 
With $L$-isogeny graphs, the kernel of an isogeny which is the composition of a non-backtracking walk can still be non-cyclic, as demonstrated in~\Cref{ex:IsogenyDiamond}. 

\begin{example}[A Non-Cyclic Kernel Arising from an Isogeny Diamond] \label{ex:IsogenyDiamond}
Let $E_1$ be an elliptic curve, and suppose $\varphi\colon E_1 \to E_2,\psi \colon E_1 \to E_3$ are two isogenies of distinct prime degrees, each with domain $E_1$.
Define isogenies $\varphi'$ from $E_3$ and $\psi'$ from $E_2$ with $\ker\varphi' = \psi(\ker\varphi)$ and $\ker\psi' = \varphi(\ker\psi)$. The diagram in Figure~\ref{fig:isogenydiamond} commutes (up to post-composition
by an automorphism) \cite[Corollary III.4.11]{Silverman}.

\begin{figure}[htbp]
    \centering
    \begin{tikzpicture}
    \node[] (E1) at (0,0) {$E_1$};
    \node[] (E2) at (0,-2) {$E_2$};
    \node[] (E3) at (2,0) {$E_3$};
    \node[] (E4) at (2,-2) {$E_4$};

    \draw[->] (E1) to node[left] {$\varphi$} (E2);
    \draw[->] (E1) to node[above] {$\psi$} (E3);
    \draw[->] (E3) to node[right] {$\varphi'$} (E4);
    \draw[->] (E2) to node[below] {$\psi'$} (E4);
    \end{tikzpicture}
    \caption{An isogeny diamond.}
    \label{fig:isogenydiamond}
\end{figure}
The isogeny diamond gives a $\{(\deg\varphi)^2,(\deg\psi)^2\}$-isogeny cycle in the $\{\deg\psi,\deg\varphi\}$-isogeny graph, namely $\widehat{\varphi}\circ\widehat{\psi'}\circ\varphi'\circ\psi=:\alpha\in\End(E_1)$. We see that $\alpha$ and $[\deg\psi]\circ[\deg\varphi]$ are equivalent by computing $\alpha(E[\deg\psi])=\alpha(E[\deg\varphi])=O_E$.
\end{example}

Indeed, the isogeny cycle $\widehat{\varphi}\circ\widehat{\psi'}\circ\varphi'\circ\psi \in\End(E_1)$ from Example~\ref{ex:IsogenyDiamond} does not have backtracking. By refactoring the cycle as $[\deg\psi]\circ[\deg\varphi]$ so that isogenies of the same degree are grouped, we see that it has non-cyclic kernel.

\begin{lemma}[Isogeny swapping]\label{lem:isog_swap}
    Suppose we are given an isogeny as the composition $\varphi'\circ\psi$ of two isogenies of coprime degrees. Define an isogeny $\psi'$ with $\ker\psi':= \varphi(\ker\psi)$ and an isogeny $\varphi$ with $\ker\varphi := \widehat{\psi}(\ker\varphi')$. Then,
    \[\varphi'\circ\psi = \alpha\circ\psi'\circ\varphi,\]
    where $\alpha$ is an automorphism of the codomain of $\varphi'$.
\end{lemma}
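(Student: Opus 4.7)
The plan is to show that both $\varphi\circ\psi$ and $\psi'\circ\varphi'$ are separable isogenies out of the same curve with the same kernel, and then conclude they differ by an automorphism of the common codomain via the universal property of quotient isogenies (\cite[Chapter III, Corollary 4.11]{Silverman}).

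Write $\psi:E_0\to E_1$ and $\varphi:E_1\to E_2$, with $n:=\deg\psi$ and $m:=\deg\varphi$ coprime. The kernel $K:=\ker(\varphi\circ\psi)\subseteq E_0$ is a subgroup of order $mn$, and coprimality of $m,n$ yields a unique internal direct sum decomposition $K=K_n\oplus K_m$ into its $n$- and $m$-primary components. Of these, $K_n=\ker\psi$; and since $\ker\widehat\psi$ has order $n$ coprime to $m=|\ker\varphi|$, the restriction of $\widehat\psi$ to $\ker\varphi$ is injective, with image an order-$m$ subgroup of $E_0$ contained in $K$, hence equal to $K_m$. Unpacking the kernel specifications of the lemma under this correspondence, $\varphi'$ is the degree-$m$ isogeny $E_0\to E_3$ with $\ker\varphi'=K_m$, and $\psi'$ is the degree-$n$ isogeny $E_3\to E_2$ with kernel $\varphi'(K_n)$, which is well-defined of order $n$ since $K_n\cap K_m=0$.

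The composition then satisfies
\begin{equation*}
\ker(\psi'\circ\varphi')=(\varphi')^{-1}\bigl(\varphi'(K_n)\bigr)=K_n+\ker\varphi'=K_n+K_m=K,
\end{equation*}
matching $\ker(\varphi\circ\psi)$. Hence $\varphi\circ\psi$ and $\psi'\circ\varphi'$ are separable isogenies from $E_0$ with equal kernels, so their codomains are canonically isomorphic and the two compositions agree up to this isomorphism. Identifying both codomains with $E_2$ converts it into an automorphism $\alpha\in\Aut(E_2)$, giving $\varphi\circ\psi=[\alpha]\psi'\circ\varphi'$ as claimed. The main subtlety is the careful tracking of codomains: \emph{a priori} $\psi'\circ\varphi'$ lands in $E_0/K$, which is only canonically isomorphic to $E_2$, and $\alpha$ records exactly this identification; in particular, when $E_2$ has no extra automorphisms, $\alpha\in\{\pm1\}$.
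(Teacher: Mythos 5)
Your proof is correct and is essentially the paper's argument made explicit: the paper simply observes that $\varphi,\psi,\varphi',\psi'$ form an isogeny diamond as in Example~\ref{ex:IsogenyDiamond} (resting on the fact that separable isogenies with equal kernels agree up to post-composition with an isomorphism, Silverman III.4.11), which is exactly the fact you verify by hand via the coprime-order decomposition $K=K_n\oplus K_m$ and the kernel computation for $\psi'\circ\varphi'$. Your additional care about identifying the codomain of $\psi'$ with that of $\varphi$ (which is where $\alpha$ comes from) is a welcome clarification that the paper leaves implicit.
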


\begin{proof}
    The result follows directly from the definitions of $\varphi$ and $\psi'$: the resulting diagram is an isogeny diamond (see~\Cref{fig:isogenydiamond}). 
\end{proof}

\subsection{Canonical Decomposition}
To handle the issue of multiple cycles composing to the same endomorphism, we provide a canonical means of refactoring a given isogeny cycle: we refactor the isogeny using Lemma~\ref{lem:isog_swap}, grouping the isogenies by increasing prime degree ($\ell_1<\ell_2<\cdots<\ell_r$, as in Notation~\ref{notation}). Consecutive isogenies of the same prime degree cannot be reordered; however they may compose to a scalar multiplication map, which may be reordered arbitrarily, so we obtain a canonical decomposition which pushes scalar multiplication maps together.

\begin{definition}[Canonical decomposition]\label{def:canonical}

 Let $\ell_1<\ell_2<\cdots<\ell_r$ be distinct primes. Given a fixed $\{\ell_1^{e_1},\ldots,\ell_r^{e_r}\}$-isogeny cycle, let $\alpha$ denote the endomorphism given by the composition of the isogenies in this isogeny cycle.

 There is a largest positive integer $n$ such that $\alpha(E[n]) = O_E$.
Write $\frac{\deg(\alpha)}{n^2} = \ell_{i_1}^{f_1} \cdots \ell_{i_m}^{f_m}$ where $i_1, \dots, i_m$ is a subsequence of $1,\dots,r$ such that each $f_i$ is non-zero.
 
The \emph{canonical decomposition} of $\alpha$ is the decomposition: 
\begin{equation}\label{eq:canon_decomp}
\alpha = [n]\circ(\varphi_{m, f_m}\circ\cdots\circ\varphi_{m, 1})\circ\cdots\circ(\varphi_{2, f_2}\circ\cdots\circ\varphi_{2, 1})\circ(\varphi_{1, f_1}\circ\cdots\circ\varphi_{1, 1}),
\end{equation}
where $\deg\varphi_{j, k} = \ell_{i_j}$ for all \(k\). 

We define the \emph{$L$-isogeny graph walk associated to the canonical decomposition} of $\alpha$ to be the walk given by the list of edges: 
\[\varphi_{m, f_m},\cdots,\varphi_{m, 1},\cdots,\varphi_{2, f_2},\cdots,\varphi_{2, 1},\varphi_{1, f_1},\cdots,\varphi_{1, 1}.\]
\end{definition}

\begin{remark}\label{rmk:[n]notwalk}
    We do note associate any isogeny walk in the \(L\)-isogeny graph to the $[n]$ that appears in \Cref{eq:canon_decomp}, since there are many walks in the graph whose associated isogenies compose to \([n]\) and there is no canonical way to choose one. For example, there are three walks in the 2-isogeny graph corresponding to $[2]$ at a particular starting vertex, but $[2]$ itself is not an edge in the graph.  
    
    However, the endomorphism $(\varphi_{m, f_m}\circ\cdots\circ\varphi_{m, 1})\circ\cdots\circ(\varphi_{2, f_2}\circ\cdots\circ\varphi_{2, 1})\circ(\varphi_{1, f_1}\circ\cdots\circ\varphi_{1, 1})$ corresponds to a uniquely-determined $\{\ell_{i_j}^{f_j}\}$-isogeny cycle.
\end{remark}

This motivates the following algorithm for finding the canonical decomposition of an isogeny cycle.

\begin{savenotes}
\begin{algorithm}
    \caption{Finding a canonical decomposition of an endomorphism}\label{alg:canonicaldecomp}
    \Input {An endomorphism $\alpha$ of a supersingular elliptic curve $E$ given as the composition of a list of (known) prime-degree isogenies: $(\varphi_{1},\dots,\varphi_{N})$}
        
    \Output {An ordered list $(\psi_1,\dots,\psi_M)$ of isogenies of non-decreasing prime degree, and an integer $n$ such that the composition $[n]_E\circ\psi_M\circ\cdots\circ\psi_1$ is equal to $\alpha$ up to sign. }

    Apply BubbleSort\footnote{Lemma~\ref{lem:isog_swap} allows us to swap \emph{adjacent} isogenies of different degrees in an isogeny cycle. Iteratively applying this idea, we can sort the isogenies in increasing degree by applying any sorting algorithm that only makes adjacent swaps. BubbleSort is one such algorithm~\cite{friend1956bubble}.} to the list $\Phi:=(\varphi_{1},\dots,\varphi_{N})$ to sort by increasing degree, using the isogeny swapping procedure in Lemma~\ref{lem:isog_swap}\;
    \label{line:BubbleSort}

    $n_0 \gets 1$\;\label{line:Initialize_n0}
    $i \gets 1$\;\label{line:Iniitialize_i}
    \While{$i \leq \mathsf{len}(\Phi)$}{
        \label{line:StartWhile}
        Let \(E_i\) be the domain of \(\Phi[i]\)\;\label{line:Domain}
        \If{$\Phi[i+1]\circ\Phi[i] \in \Aut(E_i)[\deg\Phi[i]]$}{
        \label{line:StartIf}
            $n_0 \gets n_0 \cdot \deg\Phi[i]$\;\label{line:Update_n0}
            Remove $\Phi[i],\Phi[i+1]$ from $\Phi$\;\label{line:RemoveDuals}
            $i \gets \textsf{max}(1, i - 1)$\;\label{line:Reset_i}
        }
        \Else{
            $i \gets i + 1$\;\label{line:Increment_i}
        }
    }

    \Return $\Phi, n_0$.\;
\end{algorithm}
\end{savenotes}

\begin{proposition}\label{lem:alg_correct}
    Algorithm~\ref{alg:canonicaldecomp} computes a canonical decomposition of an isogeny cycle. 
\end{proposition}

\begin{proof}
Let $\alpha$ be the endomorphism obtained from an $\{\ell_1^{e_1},\ldots,\ell_r^{e_r}\}$-isogeny cycle represented as a prime decomposition $(\varphi_1, \dots, \varphi_N)$.
Let $(\tilde{\Phi}, \tilde{n})$ be the output of Algorithm \ref{alg:canonicaldecomp} on input $\alpha$,  where $\tilde{\Phi} = (\psi_1, \dots \psi_M)$.
        Let $\beta = \psi_M \circ \cdots \circ \psi_1$, and let $n$ be the largest integer such that $E[n] \subseteq \ker\alpha$. The list $\Phi$ obtained after \cref{line:BubbleSort} gives a prime decomposition of $\alpha$, with the isogenies ordered by increasing degree. This property is preserved by the loop, as well as the invariant $\alpha = [n_0]_E \circ \Phi[\mathsf{len}(\Phi)] \circ \dots \circ\Phi[1]$. Hence the identity $\alpha = [\tilde{n}] \circ \beta$ is satisfied. The internal loop variable $n_0$ is non-decreasing, hence we have $\tilde{n} \le n$. To show equality, note that if $\beta$ factors through the scalar map $[\ell]$ for some $\ell\in L$, then $\psi_b\circ\dots\circ\psi_a$ must factor through $[\ell]$ where $\psi_a, \dots \psi_b$ are the elements of $\tilde{\Phi}$ of degree $\ell$. In turn, an $\ell^\star$-isogeny factors through $[\ell]$ if and only if it is backtracking, i.e., there exists an index $c$ with $a \le c < b$ such that $\psi_{c+1} = \pm\widehat{\psi_c}$, because $p\equiv 1\pmod{12}$. This condition is prohibited in the output by the conditional on \cref{line:StartIf}. 

        To see that the canonical decomposition is unique, note that the degree determines the ordering of the prime-power parts of the walk. The decomposition of a prime-power degree isogeny is unique up to composition with automorphisms at the vertices. Thus, if the vertices in the walk have only $[\pm 1]$ automorphisms, the canonical decomposition is unique up to $[\pm 1]$.
\end{proof}

\begin{corollary}\label{prop:cyclickernel}
    The $L$-isogeny graph walk corresponding to the canonical decomposition of an isogeny cycle contains no backtracking and the corresponding endomorphism has cyclic kernel.
\end{corollary}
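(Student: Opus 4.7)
The plan is to derive the corollary from a block-by-block reduction to the single-prime equivalence of \cite[Proposition~4.5]{bank2019cycles}; the main work is a clean identification between the cyclicity of $\ker\beta$ and the cyclicity of each block's kernel.

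First I would recall from Definition~\ref{def:canonical} and Lemma~\ref{lem:alg_correct} that the canonical decomposition writes $\alpha=[\tilde n]\circ\beta$ with $\beta=\beta_m\circ\cdots\circ\beta_1$ grouped by increasing prime degree, $\deg\beta_j=\ell_{i_j}^{f_j}$. I would then show that $(\ker\beta)_{\ell_{i_j}}\cong\ker\beta_j$: because the isogenies $\beta_1,\ldots,\beta_{j-1}$ and $\beta_{j+1},\ldots,\beta_m$ all have degree coprime to $\ell_{i_j}$, they restrict to isomorphisms on $\ell_{i_j}$-primary torsion, and a short diagram chase shows that $P\mapsto\beta_{j-1}\circ\cdots\circ\beta_1(P)$ gives an isomorphism from the $\ell_{i_j}$-primary part of $\ker\beta$ onto $\ker\beta_j$. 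Since the $\ell_{i_j}$ are distinct primes, $\ker\beta\cong\bigoplus_j\ker\beta_j$ as abelian groups, so $\ker\beta$ is cyclic if and only if each $\ker\beta_j$ is cyclic.

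For the walk of $\beta$, backtracking across a block boundary is impossible (adjacent edges would then have distinct prime degrees and cannot be duals), so the walk of $\beta$ has backtracking if and only if some intra-block walk does. By \cite[Proposition~4.5]{bank2019cycles}, intra-block backtracking in block $j$ is equivalent to $\ker\beta_j$ being non-cyclic, which by the previous paragraph is equivalent to $\ker\beta$ being non-cyclic. The main technical step, and the only genuine obstacle, is the isomorphism $(\ker\beta)_{\ell_{i_j}}\cong\ker\beta_j$: it requires tracking how coprime-degree isogenies preserve $\ell_{i_j}$-primary torsion and uses both the earlier and later blocks in the canonical ordering. Once it is in hand, the corollary follows immediately by combining this with \cite[Proposition~4.5]{bank2019cycles} applied within each block.
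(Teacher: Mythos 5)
Your proposal is correct and follows essentially the same route as the paper: since the canonical decomposition groups isogenies by prime degree, backtracking can only occur inside a single-degree block, and within each block the statement reduces to \cite[Proposition~4.5]{bank2019cycles}. The only difference is that you make explicit the kernel-decomposition step $\ker\beta\cong\bigoplus_j\ker\beta_j$ (via coprime-degree isogenies being isomorphisms on $\ell_{i_j}$-primary torsion), which the paper's one-line proof leaves implicit.
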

\begin{proof}
    The scalar portion of the canonical decomposition is not part of the walk of the graph, see Remark~\ref{rmk:[n]notwalk}.
    Since the canonical decomposition of an isogeny cycle groups together isogenies of the same prime degree, it contains backtracking if and only if it contains a walk of degree-$\ell_i$ isogenies which contains backtracking \cite[Prop. 4.5]{bank2019cycles}.
\end{proof}

Every isogeny can be written as $[n]\circ\phi$, where $\phi$ has cyclic kernel. If $[n] = [1]$, we will call the isogeny cycle \emph{principal}.

\begin{definition}[Principal isogeny cycles]\label{def:principal}
    An $\{\ell_1^{e_1},\dots, \ell_r^{e_r}\}$-isogeny cycle is principal if its canonical decomposition has scalar factor $[1]$. 
    Let $\operatorname{Cycles}_p(\ell_1^{e_1},\ldots,\ell_r^{e_r})^{\mathrm{prin}}$ denote the subset of principal cycles. 
\end{definition} 
The endomorphism obtained from an isogeny cycle in $\operatorname{Cycles}_p(\ell_1^{e_1},\ldots,\ell_r^{e_r})^{\mathrm{prin}}$ has cyclic kernel, by Corollary ~\ref{prop:cyclickernel}.

\begin{figure}
    \centering
    \scalebox{.7}{
    \begin{tikzpicture}[xscale=1, yscale=1.4]
\node[circle,minimum size=.8cm,draw,line width=.4mm] (E1) at (1,3) {$j_1$};
\node[circle,minimum size=.8cm,draw,line width=.4mm] (E2) at (2,1) {$j_5$};
\node[circle,minimum size=.8cm,draw,line width=.4mm] (E3) at (3.5,4) {$j_2$};
\node[circle,minimum size=.8cm,draw,line width=.4mm] (E4) at (5,1) {$j_4$};
\node[circle,minimum size=.8cm,draw,line width=.4mm] (E5) at (6,3) {$j_3$};

\draw[<-, line width=.6mm, dashed] (E1) -- (E2) node [midway, fill=white] {$\varphi_{3,3}$};
\draw[<-, line width=.6mm] (E3) -- (E1) node [midway, fill=white] {$\varphi_{2,1}$};
\draw[->, line width=.6mm,dashed] (E3) -- (E5) node [midway, fill=white] {$\varphi_{3,1}$};
\draw[<-, line width=.6mm] (E4) -- (E5) node [midway, fill=white] {$\varphi_{2,2}$};
\draw[->, line width=.6mm,dashed] (E4) -- (E2) node [midway, fill=white] {$\varphi_{3,2}$};
\end{tikzpicture}
}
\caption{A $\{2^2,3^3\}$-isogeny cycle in the $\{2,3\}$-isogeny graph. The (solid line) isogenies $\varphi_{2,1}$ and $\varphi_{2,2}$ are degree-2 and the (dashed line) isogenies $\varphi_{3,1},\varphi_{3,2},\varphi_{3,3}$ are degree-3. This $\{2^2,3^3\}$-isogeny cycle can be specified by the tuple $(\varphi_{3,3},\varphi_{3,2},\varphi_{2,2},\varphi_{3,1},\varphi_{2,1})$, starting at the vertex $j_1$.}
\label{fig:L-isog-cycle}
\end{figure}

\begin{example}[Isogeny cycle]\label{ex:isocycle}
    Consider the abstract $\{2^2,3^3\}$-isogeny cycle beginning at vertex $j_1$ depicted in Figure~\ref{fig:L-isog-cycle}. Let $\theta\in\End(E_{j_1})$ denote the endomorphism obtained by composing (compatible isogeny representatives of) the edges of this cycle. The endomorphism $\theta$ can be written as a composition of prime degree isogenies in a total of $\binom{5}{2} = 10$ ways, each with two degree-2 isogenies and three degree-3 isogenies. The canonical decomposition of this endomorphism will be of the form $\psi_{3,3}\circ\psi_{3,2}\circ\psi_{3,1}\circ\psi_{2,2}\circ\psi_{2,1}$, where the $\psi_{i_j}$ isogenies are chosen so that the kernel of the composition remains unchanged. In particular, 
    \[\ker\psi_{2,2} = \ker(\varphi_{3,3}\circ\varphi_{3,2}\circ\varphi_{2,2}\circ\varphi_{3,1})\cap E(j_2)[2],\]
    where $E(j_2)$ is the codomain of $\varphi_{2,1}$. 
\end{example}

\begin{example}[Isogeny cycle which is not principal]\label{ex:npisocycle}
Here, we give an explicit example of an isogeny cycle in non-canonical form which does not contain backtracking, but whose canonical form does contain backtracking (it has a scalar factor $[2]$). See Figure~\ref{fig:decomp}.

Let $p = 2689$. We consider a non-backtracking $\{2^2,5,13\}$-isogeny cycle of a supersingular elliptic curve over $\Fpbar$. The details of this computation can be found in the code file \verb|example_nonprincipalcycle.ipynb|.

We have the following isogenies $\varphi_{i_{(\cdot)}}$, where $\deg\varphi_{i_{(\cdot)}} = i$:
\begin{multicols}{2}
\begin{enumerate}
    \item $\varphi_{2,1}:E_0\to E_1$,
    \item $\varphi_{5}:E_1\to E_2$,
    \item $\varphi_{2,2}:E_2\to E_3$,
    \item $\varphi_{13}:E_3\to E_0$,
\end{enumerate}
\end{multicols}
where $E_0: y^2 = x^3 + 2236x + 1886$, $E_1: y^2 = x^3 + 732x + 2243$, $E_2: y^2 = x^3 + 750x + 791$, $E_3: y^2 = x^3 + 1996x +1015$.

This isogeny cycle is depicted in Figure~\ref{fig:decomp1}.
This cycle is non-principal. Let $\eta:=\varphi_{13}\circ\varphi_{2,2}\circ\varphi_5\circ\varphi_{2,1}$. The endomorphism $\eta$ contains the factor $[2]$, since $\eta(E_0[2]) = \mathcal{O}_{E_0}$. However, the isogeny cycle does not contain backtracking. This is not a contradiction of Corollary~\ref{prop:cyclickernel}, because the isogeny cycle has not been presented in canonical form.

To refactor $\eta$ into canonical form, we swap $\varphi_5$ and $\varphi_{2,2}$ in the order of compositions. Define a new $\psi_{2,2}:E_1\to E_0$ with $\ker\psi_{2,2} = \ker\widehat{\varphi_{2,1}}$. In fact, $\psi_{2,2} = \pm\widehat{\varphi_{2,1}}$, and here we see the backtracking. 
Next, compute $\psi_{2,2}(\ker\varphi_5) =\widehat{\varphi_{2,1}}(\ker\varphi_5)$ to define $\psi_5:E_0\to E_3$. This completes the refactorisation of $\eta$ as $\eta:=[2] \circ \varphi_{13}\circ\psi_5$, as $\widehat{\varphi_{2,1}}\circ\varphi_{2,1}=[2]$. See Figure~\ref{fig:decomp2}. In this decomposition, we see that $\eta$ is the concatenation of two isogeny cycles: one principal $\{5,13\}$-isogeny cycle and one non-principal $\{2^2\}$-isogeny cycle. In particular, once the prime-degree isogenies in the cycle have been sorted in order of degree, the resulting walk has backtracking, and the canonical decomposition has the scalar factor \([2]\).

\begin{figure*}[t!]
    \centering
    \begin{subfigure}[t]{0.475\textwidth}
        \centering
        \begin{tikzpicture}
\node[circle,minimum size=.8cm,draw,line width=.4mm] (E0) at (0,0) {$E_0$};
\node[circle,minimum size=.8cm,draw,line width=.4mm] (E1) at (2,0) {$E_1$};
\node[circle,minimum size=.8cm,draw,line width=.4mm] (E2) at (2,-2) {$E_2$};
\node[circle,minimum size=.8cm,draw,line width=.4mm] (E3) at (0,-2) {$E_3$};
\node[] (phi21) at (1,1) {$\varphi_{2,1}$};

\draw[->, line width=.6mm,dotted] (E0) to [out=45,in=135] (E1);
\draw[->, line width=.6mm] (E1) -- (E2) node [midway,right, fill=white] {$\varphi_{5}$};
\draw[->, line width=.6mm,dotted] (E2) -- (E3) node [midway,below, fill=white] {$\varphi_{2,2}$};
\draw[->, line width=.6mm,dashed] (E3) -- (E0) node [midway, left, fill=white] {$\varphi_{13}$};
\end{tikzpicture}
        \caption{The original $\{2^2,5,13\}$-isogeny cycle whose edges compose to the endomorphism $\eta$ described in Example~\ref{ex:npisocycle}.}\label{fig:decomp1}
    \end{subfigure}%
    \hfill
    \begin{subfigure}[t]{0.475\textwidth}
        \centering
        \begin{tikzpicture}
\node[circle,minimum size=.8cm,draw,line width=.4mm] (E0) at (0,0) {$E_0$};
\node[circle,minimum size=.8cm,draw,line width=.4mm] (E1) at (2,0) {$E_1$};
\node[circle,minimum size=.8cm,draw,line width=.4mm] (E2) at (2,-2) {$E_2$};
\node[circle,minimum size=.8cm,draw,line width=.4mm] (E3) at (0,-2) {$E_3$};
\node[] (phi21) at (1,1) {$\varphi_{2,1}$};
\node[] (phi21hat) at (1,-.25) {$\widehat{\varphi_{2,1}}$};
\node[] (phi13) at (-1,-1) {$\varphi_{13}$};

\draw[->, line width=.6mm,dotted] (E0) to [out=45,in=135] (E1);
\draw[->, line width=.6mm,dotted] (E1) to [out=225,in=315] (E0);
\draw[->, line width=.6mm,dashed] (E3) to [out=135,in=225] (E0);
\draw[->,line width=.6mm] (E0) -- (E3) node [midway, right, fill=white] {$\psi_5$};
\end{tikzpicture}
        \caption{The endomorphism $\eta$ refactored in canonical decomposition. $\eta$ is the composition of a principal $\{5,13\}$-isogeny cycle and the scalar multiplication $[2]$.}\label{fig:decomp2}
    \end{subfigure}
    \caption{Isogeny cycle decompositions.}\label{fig:decomp}
\end{figure*}

\end{example}

\subsection{Equivalence classes of cycles}

In undirected graphs, a graph-theoretic cycle beginning at a base point vertex $V$ can be traversed in two possible directions from that base point $V$: the reversing of direction is the effect of taking the dual isogeny. For isogeny cycles whose degrees have more than one prime factor, the dual isogeny cycle will have a distinct canonical decomposition. To count isogeny cycles in Section~\ref{sec:countingcycles}, we will really be counting endomorphisms. When $|L|>1$ and $p\equiv1\pmod{12}$, an endomorphism uniquely (up to sign) determines the canonical decomposition of an isogeny-cycle. 
We define an equivalence
relation on the set of isogeny cycles by declaring two cycles to be equivalent
if they have the same canonical decomposition.

\section{Counting cycles}\label{sec:countingcycles}
As in the previous section, we continue to assume that $p \equiv 1 \mod 12$. 
We now give two methods to count equivalence classes of principal isogeny cycles in $L$-isogeny graphs. First using Brandt matrices, and secondly using ideal relations in quadratic orders.
Implementations of these counting methods, along with the examples given are in code file \verb|cycle_counts.ipynb|.


\subsection{Counting isogeny cycles using Brandt matrices}\label{ssec:brandt}
As briefly alluded to in Section \ref{Brandtpremiliminaires}, one way of counting cycles, in the isogeny graph $\mathcal{G}(p,\ell)$, 
is by relating them to diagonal elements on the Brandt matrix $B(\ell^k)$ for \(k \geq 1\) associated to the isogeny graph in question. 
Indeed, $B(\ell)$ can be seen as the adjacency matrix of the graph $\mathcal{G}(p,\ell)$. 
One can also obtain a formula for the
total number of cycles by considering the trace of certain Brandt matrices, which
can be computed efficiently by re-expressing them as sums of modified Hurwitz class numbers (see theorems \ref{GrossBrandtToHurwitz} and \ref{GrossHurwitzSum}). 
This method is detailed thoroughly in \cite{wissam2022loops}.
One of the advantages of this method is that one can very efficiently obtain upper bounds as well as estimates for the total number of cycles in the graph.

For a prime $p$ and a set $L:=\{\ell_1,\dots,\ell_r\}$, 
we are interested in computing the number of principal isogeny cycles of degree $\ell_1^{e_1} \cdots \ell_r^{e_r}$, 
at any base point $E$, 
in the graph $\mathcal{G}(p,L)$. 
For a fixed base point $E$, these cycles are denoted 
$\operatorname{Cycles}_p(\ell_1^{e_1},\dots, \ell_r^{e_r})_E^{\operatorname{prin}}/\sim$, 
where $\sim$ is equivalence up to refactoring. 
As an extension of the above, we would also like to compute the number of principal isogeny cycles, 
at any base point $E$, of length $R$ in $\mathcal{G}(p,L)$, 
i.e., of degree $\ell_1^{e_1} \cdots \ell_r^{e_r}$ with $\sum_{i} e_i=R$. 
In the case $L = \{\ell_1,\ell_2\}$, this is very similar to~\cite[Section 4.2]{wissam2022loops}. 

Let $\{E_1,\dots,E_n\}$ be a set of representatives for the vertices of $\mathcal{G}(p,L)$. 
Then the number of principal isogeny cycles of degree $\ell_1^{e_1} \cdots \ell_r^{e_r}$, 
at base point $E_i$, in $\mathcal{G}(p,L)$ is equal to $B_{ii}(\ell_1^{e_1}\cdots\ell_r^{e_r})$ 
minus the number of non-principal cycles (endomorphisms) of $E_i$ of degree $\ell_1^{e_1}\cdots\ell_r^{e_r}$. 

For any non-principal cycle, the scalar part of its canonical decomposition must be a multiple of \([\ell_j]\) for some \(j\). 
To remove cycles whose scalar part factors through $[\ell_j]$, we can subtract $B_{ii}(\ell_1^{e_1}\cdots \ell_j^{e_j-2} \cdots \ell_r^{e_r})$ from $B_{ii}(\ell_1^{e_1}\cdots\ell_r^{e_r})$, for all $j$. Of course, one must then account for the cycles whose scalar part factors through multiple \([\ell_j]\)'s that have been subtracted more than once. 
This number can be computed using the inclusion-exclusion principle. 
In order to simplify our notation, we extend the field of definition of $B$ to include rational numbers by setting $B(m):=0$ for $m \not \in \mathbb{Z}_{\geq 1}$.   
So, for example, if $L=\{\ell_1,\ell_2\}$, then we have that the number of endomorphisms of $E_i$ of degree $\ell_1^{e_1}\ell_2^{e_2}$ involving scalar factors is given by 
$B_{ii}(\ell_1^{e_1 -2 } \ell_2^{e_2}) + B_{ii}(\ell_1^{e_1} \ell_2^{e_2-2}) - B_{ii}(\ell_1^{e_1 -2 } \ell_2^{e_2-2})$ (cf.~\cite[Lemma 4.2]{wissam2022loops}). 
Hence, in the general case of $L=\{\ell_1, \dots, \ell_r\}$, 
the number of endomorphisms of $E_i$ of degree $\ell_1^{e_1}\cdots\ell_r^{e_r}$ involving scalar factors is
\[
\sum_{\varnothing \subsetneq J \subseteq \{1,\hdots, r\} } (-1)^{|J|-1} B_{ii}\left( \frac{\prod_{i=1}^{r} \ell_i^{e_i}}{\prod_{j \in J} \ell_j^2} \right).
\]
Summing over all vertices of the graph, 
we get the following formula. 
\begin{theorem} \label{TotalSumofCyclesBrandt} 
The total number of principal isogeny cycles of degree $\ell_1^{e_1} \cdots \ell_r^{e_r}$ 
and any base point $E_i$, in $\mathcal{G}(p,L)$, is given by 
	\begin{align*}
| \operatorname{Cycles}_p(\ell_1^{e_1},\dots, \ell_r^{e_r} )^{\operatorname{prin}} /\sim| = \sum_{J \subseteq \{1,\hdots, r\}} (-1)^{|J|} \Tr\left( B\left( \frac{\prod_{i=1}^{r} \ell_i^{e_i}}{\prod_{j \in J} \ell_j^2} \right)\right)
	\end{align*} 
\end{theorem}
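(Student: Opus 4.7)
The plan is to prove the formula vertex by vertex and then sum, with the core argument being an inclusion-exclusion over which primes contribute a scalar factor in the canonical decomposition. Fixing a base point $E_i$, the first observation to record is that, by the Deuring correspondence and \cite[Proposition 2.3]{gross1987heights}, the diagonal entry $B_{ii}(m)$ counts equivalence classes (up to sign) of endomorphisms of $E_i$ of degree $m$. Under our assumption $p \equiv 1 \pmod{12}$, isogeny cycles up to refactoring are in bijection (up to sign) with their resulting endomorphisms, and hence the canonical decomposition identifies $|\operatorname{Cycles}_p(\ell_1^{e_1},\dots,\ell_r^{e_r})_{E_i}/\sim|$ with $B_{ii}(\prod_i \ell_i^{e_i})$.

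Next I would isolate the non-principal part via scalar factors. By Corollary~\ref{prop:cyclickernel}, a cycle is non-principal exactly when its canonical decomposition has backtracking at some prime $\ell_j$, which in the canonical form means the endomorphism factors through $[\ell_j]$. For $j \in \{1,\dots,r\}$ let $A_j^{(i)}$ be the set of (equivalence classes of) endomorphisms of $E_i$ of multidegree $(\ell_1^{e_1},\dots,\ell_r^{e_r})$ whose canonical decomposition has $[\ell_j]$ as a factor. The key bijection to establish is that post-composition with $[\ell_j]$ gives
\[
A_{j_1}^{(i)} \cap \dots \cap A_{j_k}^{(i)} \;\longleftrightarrow\; \{\text{endomorphisms of } E_i \text{ of degree } \tfrac{\prod_i \ell_i^{e_i}}{\prod_{s=1}^k \ell_{j_s}^2}\},
\]
for any $J = \{j_1,\dots,j_k\} \subseteq \{1,\dots,r\}$. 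The forward direction peels off one copy of $[\ell_{j_s}]$ for each $j_s \in J$ (which is well-defined since distinct scalar factors commute and land in different prime slots of the canonical decomposition), and the inverse direction inserts $[\ell_{j_s}]$ into the appropriate block. Thus $|A_{j_1}^{(i)} \cap \dots \cap A_{j_k}^{(i)}| = B_{ii}\!\left(\prod_i \ell_i^{e_i}/\prod_{s} \ell_{j_s}^2\right)$, with the convention $B(m) = 0$ when $m$ is not a positive integer (so empty intersections corresponding to degenerate degrees contribute $0$, which is harmless).

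Applying inclusion-exclusion to the complement $\bigcap_j \overline{A_j^{(i)}}$ of the set of non-principal endomorphisms yields
\[
|\operatorname{Cycles}_p(\ell_1^{e_1},\dots,\ell_r^{e_r})_{E_i}^{\operatorname{prin}}/\sim| = \sum_{J \subseteq \{1,\dots,r\}} (-1)^{|J|} B_{ii}\!\left(\frac{\prod_i \ell_i^{e_i}}{\prod_{j \in J} \ell_j^2}\right),
\]
where the $J = \varnothing$ term supplies the total count $B_{ii}(\prod_i \ell_i^{e_i})$. Summing over $i = 1,\dots,n$ converts each $B_{ii}(\cdot)$ into $\operatorname{Tr} B(\cdot)$ and gives the stated identity.

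The main obstacle, and the step that needs to be written carefully, is the bijection above: one must verify that dividing out scalar factors from the canonical form is well-defined and reversible, i.e., that the canonical decomposition of $[\ell_j] \circ \beta$ is obtained from the canonical decomposition of $\beta$ simply by inserting two mutually dual $\ell_j$-isogenies into the $\ell_j$-block, and conversely that every canonical decomposition containing $[\ell_j]$ arises uniquely this way. This is essentially a bookkeeping consequence of Lemma~\ref{lem:isog_swap} and Lemma~\ref{lem:alg_correct}, together with the fact that cyclic $\ell_j^{e_j}$-kernels and non-cyclic ones are distinguished by the presence of a $[\ell_j]$-factor, so no double counting occurs across different primes in $L$. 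Once this is pinned down, the inclusion-exclusion step is purely formal.
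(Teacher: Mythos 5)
Your argument is correct and follows the same route as the paper: identify $B_{ii}(\prod_i \ell_i^{e_i})$ with the count of all degree-$\prod_i \ell_i^{e_i}$ endomorphisms of $E_i$ (up to sign), observe via Corollary~\ref{prop:cyclickernel} that non-principal cycles are exactly those whose endomorphism factors through some $[\ell_j]$, identify endomorphisms factoring through $\prod_{j\in J}[\ell_j]$ with endomorphisms of the reduced degree $\prod_i\ell_i^{e_i}/\prod_{j\in J}\ell_j^2$, apply inclusion--exclusion over $J$, and sum the diagonal entries to obtain traces. Your write-up is somewhat more explicit than the paper's (naming the sets $A_j^{(i)}$ and stating the peeling-off bijection), but it is the same proof.
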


Thanks to Theorem \ref{GrossBrandtToHurwitz}, we can make the above formula more tractable, by relating each trace to sums of modified Hurwitz class numbers. 
We will avoid substituting the formula from Theorem \ref{GrossBrandtToHurwitz} into that of Theorem \ref{TotalSumofCyclesBrandt} to avoid writing out a very cumbersome expression. 
However, if one wanted to compute the quantity $|  \operatorname{Cycles}_p(\ell_1^{e_1},\dots, \ell_r^{e_r} )^{\operatorname{prin}}/\sim| $ precisely, this substitution could be carried out. 
As we will see later, if one was interested in finding bounds (or estimates) for the number of principal cycles, this method can be pushed further (by involving Theorem \ref{GrossHurwitzSum}).  


We will now consider a more general---and very natural notion of cycle count. 
Let $\mathcal{C}_{E_i}(L;R)$ denote the number of equivalence classes of principal isogeny cycles, 
at base point $E_i$, of length $R$ in $\mathcal{G}(p,L)$. 
By \textit{length $R$ in $\mathcal{G}(p,L)$}, we naturally mean a cycle in the graph that is composed of $R$ edges. In other words, we are not fixing certain exponents $\{e_1, \dots, e_r\}$ and only considering the isogeny cycles of degree $\ell_1^{e_1} \cdots \ell_r^{e_r}$, but rather any isogeny cycle of degree $\ell_1^{e_1} \cdots \ell_r^{e_r}$, for any $(e_1, \dots, e_r)$ satisfying $\sum_i e_i =R$. 
This can be seen as a generalisation of the \emph{bi-route number} defined in \cite{wissam2022loops}. We then have 
	\begin{equation} \label{TotalNumofCycle}
		\sum_{i=1}^{n} \mathcal{C}_{E_i}(L;R) 
		= \sum_{i=1}^{n} \sum_{e_1+\cdots+e_r=R} | \operatorname{Cycles}_p(\ell_1^{e_1},\dots, \ell_r^{e_r} )_{E_i}^{\operatorname{prin}}/\sim |.
	\end{equation}  

Let us now try to re-express the above quantity. 
Switching the order of summation in Equation (\ref{TotalNumofCycle}) yields the following result. 
\begin{theorem} \label{23Oct190156pthm}
The total number of equivalence classes of principal isogeny cycles, 
for any base point $E_i$, of length $R$ in $\mathcal{G}(p,L)$ is given by 
    \begin{equation} \label{23Oct190156p}
        \sum_{i=1}^{n} \mathcal{C}_{E_i}(L;R) 
        = \sum_{i=0}^r (-1)^i {\binom{r}{i}} \sum_{\substack{e_1, e_2, \hdots, e_r \in \mathbb{Z}_{\geq 0} \\ e_1+\cdots+e_r = R - 2i}} \Tr(B(\ell_1^{e_1}\cdots\ell_r^{e_r})). 
    \end{equation} 
\end{theorem}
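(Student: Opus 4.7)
The plan is to start from Equation (\ref{TotalNumofCycle}) and plug in the formula from Theorem \ref{TotalSumofCyclesBrandt} for the inner sum over base points, then interchange the order of summation and reorganize the resulting double sum by the cardinality of the index subset $J$.

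First I would write
\[
\sum_{i=1}^n \mathcal{C}_{E_i}(L;R) = \sum_{\substack{e_1,\dots,e_r \in \mathbb{N} \\ e_1+\cdots+e_r = R}} \sum_{J \subseteq \{1,\dots,r\}} (-1)^{|J|} \Tr\!\left( B\!\left(\frac{\prod_{k=1}^r \ell_k^{e_k}}{\prod_{j \in J}\ell_j^2}\right)\right),
\]
using Theorem \ref{TotalSumofCyclesBrandt} inside the outer sum over compositions of $R$. Swapping the two sums is immediate since both are finite.

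Next, for each fixed subset $J$ I would perform the change of variables $f_j := e_j - 2$ for $j \in J$ and $f_j := e_j$ otherwise. Under this substitution the constraint $\sum e_k = R$ becomes $\sum f_k = R - 2|J|$, and the argument of the Brandt matrix is exactly $\ell_1^{f_1}\cdots \ell_r^{f_r}$. The convention $B(m) := 0$ for $m \notin \mathbb{N}$ (declared just before Theorem \ref{TotalSumofCyclesBrandt}) handles the boundary terms for which some $e_j < 2$: those terms contribute zero in the original sum, and in the new variables correspond precisely to the range $f_j \in \mathbb{N}$. Thus for each $J$ with $|J|=i$,
\[
\sum_{\substack{e_1+\cdots+e_r = R}} \Tr\!\left( B\!\left(\tfrac{\prod_k \ell_k^{e_k}}{\prod_{j \in J}\ell_j^2}\right)\right) = \sum_{\substack{f_1+\cdots+f_r = R-2i}} \Tr(B(\ell_1^{f_1}\cdots \ell_r^{f_r})).
\]

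Finally, observing that the right-hand side depends on $J$ only through its size $i = |J|$, I would group the $\binom{r}{i}$ subsets of size $i$ together, pull out the common factor $(-1)^i$, and obtain exactly the stated formula. The main ``obstacle'' is really just bookkeeping: making sure the $B(m)=0$ convention correctly absorbs the degenerate terms where some exponent would become negative, so that the unrestricted sum over $f_j \in \mathbb{N}$ matches the original restricted sum. No analytic input beyond Theorem \ref{TotalSumofCyclesBrandt} is required.
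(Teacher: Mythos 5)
Your proposal is correct and follows essentially the same route as the paper, which simply says the result follows from substituting Theorem~\ref{TotalSumofCyclesBrandt} into Equation~(\ref{TotalNumofCycle}) and switching the order of summation. Your extra bookkeeping---the shift $f_j = e_j - 2$ for $j \in J$, the use of the convention $B(m)=0$ for $m \notin \mathbb{N}$ to absorb degenerate terms, and grouping subsets $J$ by size to produce the factor $(-1)^i\binom{r}{i}$---is exactly what the paper's terse argument leaves implicit.
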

We observe in Equation (\ref{23Oct190156p}), that for each $i\in\{0,...,r\}$ and each $r$-tuple of integers $(e_1,...,e_r)$ with $e_1+\cdots+e_r = R - 2i$, the binomial coefficient ${\binom{r}{i}}$ counts the number of tuples
$(f_1,...,f_r)$ there are with $f_1 + ... + f_r = R$ and such that $f_j= e_j + 2$ for
exactly $i$ of the terms and $f_j= e_j$ for the rest. For each of these tuples, when
counting cycles of degree $\ell_1^{e_1}\cdots\ell_r^{e_r}$, Theorem \ref{TotalSumofCyclesBrandt} gives us one contribution of the
form $(-1)^i \Tr(B(\ell_1^{e_1}\cdots\ell_r^{e_r}))$. 

Similarly to Theorem \ref{TotalSumofCyclesBrandt}, one can compute the quantity in Theorem \ref{23Oct190156pthm} precisely, using Theorem \ref{GrossBrandtToHurwitz}. 
If one is looking for a bound (or an approximation) of the  total number of principal cycles of length $R$, then one can go further than Equation (\ref{23Oct190156p}), by applying the methods of \cite{wissam2022loops, GKPV_Collisions}. 
Indeed, noting that $H_p(D) \le H(D)$, as in Section \ref{Brandtpremiliminaires}, and combining Theorems \ref{GrossBrandtToHurwitz} and \ref{GrossHurwitzSum}, we have an upper bound 
    \begin{equation} \label{TraceBrandtUpperBound}
        \Tr(B(\ell_1^{e_1}\cdots \ell_r^{e_r})) \le 2 \prod_{i=1}^r \frac{\ell_i^{e_i+1}-1}{\ell_i-1}
    \end{equation}
and $\Tr(B(\ell_1^{e_1}\cdots \ell_r^{e_r})) $ is in $O(\ell_1^{e_1} \cdots \ell_r^{e_r})$. 
Hence, we get the following result on the total number of cycles or length $R$. 
\begin{corollary}
    There exists a computable constant $D$, depending on $r$, 
    such that number of equivalence classes of principal isogeny cycles of length $R$ in the generalised graph $\mathcal{G}(p,L)$ is 
    bounded above by $D \binom{R+r-1}{r-1} \ell_{r}^{R} \log^2(\ell_r)$. 
\end{corollary} 
\begin{proof} 
Noting that Equation (\ref{TraceBrandtUpperBound}) trivially holds when we have fractional coefficients, we can write 
    \begin{align*}
        \sum_{i=1}^{n} \mathcal{C}_{E_i}(L;R) 
        & \le 2 \sum_{e_1+\cdots+e_r = R} \sum_{J \subseteq \{1,\hdots, r\}} 
            \frac{\left( \prod_{i=1}^{r} \ell_i^{e_i+1}\right)\big /\left( \prod_{j \in J} \ell_j^{2} \right)}{\prod_{i=1}^r\ell_i -1}\\
        & = 2 \sum_{e_1+\cdots+e_r = R}
            \prod_{i=1}^{r} \frac{\ell_i^{e_i+1}}{\ell_i -1} \sum_{J \subseteq \{1,\hdots, r\}}  \prod_{j \in J} \frac{1}{\ell_j^{2}} \\
        & = 2 \sum_{e_1+\cdots+e_r = R}
            \prod_{i=1}^{r} \frac{\ell_i^{e_i+1}}{\ell_i -1} (1+ \ell_i^{-2})\\ 
        & = 2 \sum_{e_1+\cdots+e_r = R} \prod_{i=1}^{r} \ell_i^{e_i}\left( 1+ \frac{1}{\ell_i} + \frac{2}{\ell_i (\ell_i-1)} \right)\\
        & \le 2 c_1\sum_{e_1+\cdots+e_r = R} \prod_{i=1}^{r} \ell_i^{e_i}\left( 1+ \frac{2}{\ell_i} \right), 
    \end{align*}
    where $c_1\le 4$ depends on $r$; for instance if $r\ge 6$, we can take $c_1=1$. 
    Furthermore, using known bounds on the sum of reciprocals of primes (cf. Equation (3.20) in \cite{rosser1962approximate}), 
    we have 
    \begin{align*}
        \prod_{i=1}^{r} \left( 1+ \frac{2}{\ell_i}\right) 
        & \le \exp{\left[ \sum_{i=1}^r \frac{2}{\ell_i} \right] } \\ 
        & \le \exp{\left[ 2 \left( \log \log (\ell_r) + 0.2615 + \frac{1}{\log^2(\ell_r)} \right) \right] } \\         
        & \le c_2  \log^2 (\ell_r),        
    \end{align*}
    with $c_2 \le 3$ if $r\ge 4$. Letting $D:=2 c_1 c_2$, and bounding $\prod_{i=1}^{r} \ell_i^{e_i}$ by $\ell_r^R$,  
    we obtain the desired result. 
\end{proof}

Of course, one could get a better upper bound than in Inequality (\ref{TraceBrandtUpperBound}) and thus a better upper bound for $\sum_{i=1}^{n} \mathcal{C}_{E_i}(L;R)$, depending on one's intended application.  

For a heuristic \emph{estimate}, rather than an upper bound, we can argue (as in \cite[Section 5]{GKPV_Collisions}) that we expect to have $H_p(D) \approx \frac{1}{2} H(D)$ on average. This is because the prime $p$ in the definition of $H_p(D)$ in Equation (\ref{HCNDef}) splits half the time and remains inert the other half. 
This heuristic assumption gives us the estimate 
	\begin{equation} \label{approximationTrace}
		\Tr(B(\ell_1^{e_1}\cdots \ell_r^{e_r})) \approx \prod_{i=1}^r \frac{\ell_i^{e_i+1}-1}{\ell_i-1} 
					- \frac{1}{2} \sum_{d| \ell_1^{e_1}\dots \ell_r^{e_r}} \min \left\{ d, \frac{\ell_1^{e_1}\cdots \ell_r^{e_r}}{d}\right\},
	\end{equation} 
which can then be substituted into Equation (\ref{23Oct190156p}) of Theorem \ref{23Oct190156pthm}. 
Again, we will not perform this substitution, to avoid writing a very long and cumbersome expression for the approximation of $\sum_{i=1}^{n} \mathcal{C}_{E_i}(L;R)$, but this can very easily be done (by hand or on a computer), especially once the number of primes in the set $L$ is known. 
The advantage of this approach is that it is much faster to compute the right-hand side of~\Cref{approximationTrace} than the actual trace of $B(\ell_1^{e_1}\cdots \ell_r^{e_r})$.

\subsection{An ideal interpretation of certain isogeny cycles}\label{ssec:idealcounting}

In this section, we restrict to $\{\ell_1,\dots,\ell_r\}$-isogeny cycles, where  $p \neq \ell_i$ and $\ell_i < \ell_{i+1}$ for all $i$, to give an ideal-theoretic interpretation.


As isogeny cycles correspond to endomorphisms of elliptic curves, there is a connection to the theory of embeddings into quaternion orders. 
For a prime $\ell$, the Deuring correspondence \cite{Deuring41} gives a bijection between $\ell$-isogenies of supersingular elliptic curves over $\Fpbar$ and integral left ideals of norm $\ell$, of maximal orders in $\mathcal{B}_p$, up to isomorphisms of curves/orders. 
A non-scalar element $\alpha$ of a maximal quaternion order (associated to a non-scalar endomorphism of a supersingular elliptic curve) generates an imaginary quadratic order over $\mathbb{Z}$. 
In this way, we view isogeny cycles as embeddings of imaginary quadratic orders into the maximal quaternion order. 
We count isogeny cycles by relating them to these embeddings, extending work of \cite{ColoKohel20,Onuki21,Arpin2022OrientationsAC} to the $L$-isogeny graph setting. We recall some of the results and definitions from these works briefly here. 
For the purposes of this discussion, $\mathcal{O}$ will always denote an order in an imaginary quadratic field. 

\begin{definition}[Primitive $\mathcal{O}$-embedding {\cite[Def. 3.1, 3.3]{Onuki21}}]
    Let $\mathcal{O}$ denote an imaginary quadratic order, $K:=\mathcal{O}\otimes_\mathbb{Z}\mathbb{Q}$, and let $E$ be a supersingular elliptic curve. An embedding $\iota:K\hookrightarrow\End(E)\otimes_\mathbb{Z}\mathbb{Q}$ is called a \emph{primitive} $\mathcal{O}$-embedding if $\iota(K)\cap\End(E) = \iota(\mathcal{O})$.

    When focusing on the imaginary quadratic order rather than the field, we may denote a primitive $\mathcal{O}$-embedding $\iota$ by $\iota:\mathcal{O}\hookrightarrow\End(E)$.
\end{definition}

\begin{definition}[{\cite[Sec. 3.1]{Onuki21}}]
    Let $\SSOpr$ denote the set of isomorphism classes of supersingular elliptic curves $E$ together with a choice of primitive $\mathcal{O}$-embedding into $\End(E)$.
\end{definition}

Recall that we let $h_\mathcal{O}$ denote the class number of the imaginary quadratic order $\mathcal{O}$.

\begin{proposition}\label{prop:idealsandendos}
    The class group of $\mathcal{O}$ acts freely on $\SSOpr$ and has one or two orbits, depending on if $p$ is ramified or inert (resp.) in the imaginary quadratic field containing $\mathcal{O}$. In particular, $|\SSOpr| = h_\mathcal{O}$ if $p$ is ramified and $|\SSOpr| = 2 h_\mathcal{O}$ if $p$ is inert.

    If $\mathfrak{l}_i$ are ideals above $\ell_i$ in $\mathcal{O}$ such that $\prod_{i=1}^r\mathfrak{l}_i$ is principal, then the endomorphisms of the elliptic curves in $\SSOpr$ which correspond to $\prod_{i=1}^r\mathfrak{l}_i$ have canonical decompositions which are principal $\{\ell_1,\dots,\ell_r\}$-isogeny cycles. 
\end{proposition}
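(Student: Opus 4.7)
The plan is to dispatch the two assertions separately. For the first assertion, I would simply cite Onuki's theorem \cite{Onuki21}, which establishes precisely that $\operatorname{Cl}(\mathcal{O})$ acts freely on $\SSOpr$, is transitive when $p$ ramifies in $\mathcal{O}\otimes\mathbb{Q}$ (yielding a single orbit of size $h_{\mathcal{O}}$) and splits into two orbits interchanged by $\mathbb{F}_p$-Frobenius when $p$ is inert (total size $2h_{\mathcal{O}}$). The split case corresponds to $p$ splitting into two maximal left $\mathcal{O}$-ideals in $\mathcal{B}_p$, which swap the two orientations of the embedding, explaining the doubling.

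For the second assertion, fix $(E,\iota)\in\SSOpr$, and for each $i$ let $\mathfrak{l}_i\subset\mathcal{O}$ be a prime ideal of norm $\ell_i$ lying above $\ell_i$. Via $\iota$, each $\mathfrak{l}_i$ determines a left $\End(E)$-ideal $I_i := \End(E)\iota(\mathfrak{l}_i)$ of reduced norm $\ell_i$, which in turn, by the Deuring correspondence, determines an $\ell_i$-isogeny $\varphi_{\mathfrak{l}_i}:E\to E_i$ with $\ker\varphi_{\mathfrak{l}_i} = \bigcap_{\beta\in I_i}\ker\beta$. Assume $\prod_i\mathfrak{l}_i$ is principal, generated by $\alpha\in\mathcal{O}$. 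Then $\iota(\alpha)$ is an endomorphism of $E$ of reduced norm $\prod_i \ell_i$. The factorisation $(\alpha)=\mathfrak{l}_1\cdots\mathfrak{l}_r$ translates, under the class-group action recalled in the previous paragraph, to a composition of the isogenies $\varphi_{\mathfrak{l}_i}$ (with appropriate compatible representatives chosen along the way) that agrees up to sign with $\iota(\alpha)$, giving a closed walk in $\mathcal{G}(p,L)$ with one edge of each degree $\ell_i$.

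It remains to verify that the resulting isogeny cycle is principal. For this I examine the canonical decomposition of $\iota(\alpha)$ as in Definition~\ref{def:canonical}. Since $\deg\iota(\alpha)=\prod_i\ell_i$ is squarefree, the largest $n$ with $\iota(\alpha)(E[n])=\mathcal{O}_E$ must satisfy $n^2\mid\prod_i\ell_i$, forcing $n=1$; no scalar factor can be extracted. Consequently each exponent $e_i$ in the canonical form equals $1$, leaving exactly one isogeny of each prime degree in the decomposition. Since backtracking requires two consecutive isogenies of the same degree, backtracking cannot occur, and the cycle is principal by Definition~\ref{def:principal}.

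The main obstacle is the second paragraph: keeping the translation between the ideal-theoretic product $\prod_i\mathfrak{l}_i$ and a concrete composition of isogenies in $\mathcal{G}(p,L)$ consistent with the orientation $\iota$. In the single-prime case this is standard \cite{ColoKohel20,Onuki21,Arpin2022OrientationsAC}, but in our multi-prime setting one must check that the codomains of successive $\varphi_{\mathfrak{l}_i}$ carry compatible primitive $\mathcal{O}$-orientations so that the next $\mathfrak{l}_{i+1}$ acts in the expected way. Once this compatibility is in hand, the squarefree-degree observation makes the principality of the cycle essentially immediate.
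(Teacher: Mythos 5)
Your proposal is correct and follows essentially the same route as the paper, whose proof simply cites \cite{Onuki21} and \cite{Arpin2022OrientationsAC} for the free class-group action and orbit count and applies them to the $\{\ell_1,\dots,\ell_r\}$ setting; your added detail (the Deuring translation of $\mathfrak{l}_1\cdots\mathfrak{l}_r$ into a closed walk, plus the squarefree-degree observation that forces $n=1$ and makes backtracking impossible since duals preserve degree) is exactly the intended application. One minor imprecision, not load-bearing: the doubling in the inert case is not due to ``$p$ splitting into two maximal left $\mathcal{O}$-ideals in $\mathcal{B}_p$'' ($p$ is ramified in $\mathcal{B}_p$), but to the orientation and its complex conjugate (equivalently the Frobenius twist) lying in distinct orbits.
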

\begin{proof}
    This proposition condenses several results in \cite{Onuki21,Arpin2022OrientationsAC}, and applies these results to the case of $\{\ell_1,\dots,\ell_r\}$-isogenies.
\end{proof}

\begin{definition}\label{def:Iell1ellr}
       Define the set $\mathcal{I}_{p}$ of imaginary quadratic orders $\mathcal{O}$ such that:
    \begin{itemize}
        \item $p$ is inert in $\mathcal{O}\otimes_\ZZ\QQ$ and
        \item $p$ does not divide the conductor of $\mathcal{O}$.
    \end{itemize}
\end{definition}
The first condition guarantees that $|\SSOpr| = 2h_\mathcal{O}$.

\begin{definition}
    For an imaginary quadratic order $\mathcal{O}$, let $r_\mathcal{O}(n)$ denote the number of elements of norm $n$ in $\mathcal{O}$.
\end{definition}

The set $\mathcal{I}_p$ is infinite, but for any fixed $n$, $\mathcal{I}_p$ contains finitely many orders for which $r_{\mathcal{O}}(n)$ is nonzero.
With this classification of orders in mind, we now count isogeny cycles:
\begin{theorem}\label{thm:isogenycount_ideals}
    Let  $p,\ell_1,\dots,\ell_r$ be distinct primes. 
    The number of distinct canonical decompositions of principal $\{\ell_1,\dots,\ell_r\}$-isogeny cycles in the supersingular $L$-isogeny graph over $\Fpbar$ is 
    \begin{equation}\label{eq:cyclecount_ideals}
        |\mathrm{Cycles}(\{\ell_1,\dots,\ell_r\})^{\mathrm{prin}} / \sim| = \sum_{\mathcal{O}\in \mathcal{I}_p} \frac{h_{\mathcal{O}}\cdot r_{\mathcal{O}}(\ell_1\cdots\ell_r)}{2}.
    \end{equation}

\end{theorem}

\begin{proof}
    Let $n = \prod_{i=1}^r\ell_i$. For the sake of this proof, we will consider elements $(E,\iota)\in\SSOpr$ up to complex conjugation. That is, if $\theta$ is a generator of the imaginary quadratic order $\mathcal{O}$ and $\iota(\theta) = \iota'(\overline{\theta})$ where $\overline{\theta}$ denotes the complex conjugate of $\theta$, we will consider $(E,\iota)$ to be equivalent to $(E,\iota')$. Denote these equivalence classes by $[(E,\iota)]\in(\SSOpr/\sim)$. Note that $|\SSOpr| = 2|\SSOpr/\sim|$.
    
    Let $R_\mathcal{O}(n)$ denote the set of elements of an imaginary quadratic order $\mathcal{O}$ whose norm is equal to $n$. Denote by $r_\mathcal{O}(n) = |R_\mathcal{O}(n)|$. Associated to a fixed set $\mathcal{I}_p$, we consider the set 
    \begin{equation*}
    \begin{split}
        \bigsqcup_{\mathcal{O}\in\mathcal{I}_p} (\SSOpr/\sim)&\times (R_\mathcal{O}(n)/\mathcal{O}^\times) =\\
        &\{([(E,\iota)],\{\pm\alpha\}): \mathcal{O}\in \mathcal{I}_p, [(E,\iota)]\in (\SSOpr/\sim), \alpha\in\mathcal{O}, N(\alpha)=n\}.
    \end{split}
    \end{equation*}
    The size of this set is
    \[\sum_{\mathcal{O}\in\mathcal{I}_p}|\SSOpr/\sim|\cdot \frac{r_\mathcal{O}(n)}{|\mathcal{O}^\times|} = \sum_{\mathcal{O}\in\mathcal{I}_p} \frac{h_\mathcal{O}\cdot r_\mathcal{O}(n)}{2},\]
    as $|\SSOpr/\sim| = h_\mathcal{O}$ since $p$ is inert in $\mathcal{O}\otimes_\ZZ\QQ$ and $|\mathcal{O}^\times| = 2$ since $p\equiv 1\pmod{12}$.
    
    We define a map:
    \begin{equation}\label{eq:rho_map}
    \begin{split}
        \rho: \bigsqcup_{E}(\mathrm{Cycles}(\{\ell_1,\dots,\ell_r\})^{\mathrm{prin}}_E / \sim) \to \bigsqcup_{\mathcal{O}\in\mathcal{I}_p} (\SSOpr/\sim)\times (R_\mathcal{O}(n)/\mathcal{O}^\times)      
    \end{split}
    \end{equation}
    as follows. Given an $\{\ell_1,...,\ell_r\}$-isogeny cycle, let $\alpha$ denote the endomorphism obtained by composing the isogenies in this cycle starting at a vertex with isomorphism class representative $E$. By definition, $N(\alpha) = n$ and $\alpha$ is an imaginary quadratic integer. Let $\mathcal{O}= \mathbb{Z}[\alpha]$. The element $\alpha$ determines an embedding $\iota:\mathcal{O}\hookrightarrow\End(E)$, so $p$ is inert in $\mathcal{O}\otimes_\ZZ \QQ$ and $\mathcal{O}\in\mathcal{I}_p$.  The orientation $\iota$ is determined by its image on the generator of $\mathcal{O}$, which is unique up to complex conjugation in $\mathcal{O}$. To see that $\iota$ is a primitive embedding (and thus $[(E,\iota)]\in\SSOpr$), we show that there does not exist an endomorphism $\beta\in\End(E)$ such that $\alpha=[k]\circ\beta$ for some integer $k\neq\pm 1$. Since the norm of $\alpha$ is squarefree and $N([k]\circ\beta) = k^2N(\beta)$, such a decomposition is indeed not possible. The map $\rho$ is well-defined, and we will complete the proof by showing that $\rho$ is a bijection. 

    To see that $\rho$ is surjective, consider $((E,\iota),\{\pm\alpha\})$ where $\iota:\mathcal{O}\hookrightarrow\End(E)$ is a primitive embedding. The element $\alpha\in\End(E)$ necessarily has cyclic kernel, as it has squarefree degree, and so $\alpha$ can be factored into isogenies of prime-power degree, thus determining a principal isogeny cycle in $\mathrm{Cycles}(\{\ell_1,\dots,\ell_r\})^{\mathrm{prin}} / \sim$.

    To see that $\rho$ is injective, note that an isogeny cycle determines the element $\alpha$ uniquely up to automorphism and $\Aut(E) = \{[\pm1]\} = \mathcal{O}^\times$. The map $\rho$ is thus a bijection, and by our previous count of the size of the codomain we get the formula in \Cref{eq:cyclecount_ideals}. 
\end{proof}

\subsection{Examples}\label{sec:examples}

\noindent Code for these examples is given in \verb|cycle_counts.ipynb|.

\begin{example}[Principal $\{2,3\}$-isogeny cycles for $p = 61$]\label{ex:23p61}
For a small example, we wish to count the $\{2,3\}$-isogeny cycles in the supersingular $\{2,3\}$-isogeny graph over $\overline{\mathbb{F}}_{61}$, see Figure~\ref{fig:examplecounting_2}. In this case, \emph{all} $\{2,3\}$-isogeny cycles are principal, as there is no possibility of backtracking. 

A count in the graph finds ten distinct principal $\{2,3\}$-isogeny cycles:
\[(9,9),(9,9), (\alpha,\overline{\alpha}),(\overline{\alpha},\alpha),(\alpha,\overline{\alpha}),(\overline{\alpha},\alpha),(\alpha,50),(50,\alpha),(\overline{\alpha},50),(50,\overline{\alpha}).\]

\begin{figure}
    \centering
    \begin{tikzpicture}
        \node[draw,circle] (9) at (0,0) {\tiny{$9$}};
        \node[draw, circle] (alpha) at (1,.5) {\tiny{$\alpha$}};
        \node[draw, circle] (alphabar) at (1,-.5) {\tiny{$\overline{\alpha}$}};
        \node[draw,circle] (50) at (2,0) {\tiny{$50$}};
        \node[draw,circle] (41) at (3.5,0) {\tiny{$41$}};

        \draw[->,loop left] (9) to (9);
        \draw[-] (9) to (alpha);
        \draw[-] (9) to (alphabar);
        \draw[-] (alpha) to (alphabar);
        \draw[-] (alpha) to (50);
        \draw[-] (alphabar) to (50);
        \draw[-] (50) to (41);
        \draw[->,loop, looseness=8] (41) to [out=60,in=30] (41);
        \draw[->,loop, looseness=8] (41) to [out=330,in=300] (41);

        \draw[->,dashed,loop,looseness=8] (9) to [out=120,in=90 ] (9);
        \draw[->,dashed,loop,looseness=8] (9) to [out=240,in=270 ] (9);
        \draw[-,dashed] (9) to [out=60,in=120] (41);
        \draw[-,dashed] (9) to [out=300,in=240] (41);
        \draw[-,dashed] (alpha) to [out=225,in=135] (alphabar);
        \draw[-,dashed] (alpha) to [out=315,in=45] (alphabar);
        \draw[-,dashed] (alpha) to [out=0,in=135] (50);
        \draw[-,dashed] (alphabar) to [out=0,in=225] (50);
        \draw[->,dashed,loop,looseness=8] (50) to [out=60,in=30] (50);
        \draw[->,dashed,loop,looseness=8] (50) to [out=330,in=300] (50);
        \draw[-,dashed] (alpha) to [out=30,in=135] (41);
        \draw[-,dashed] (alphabar) to [out=330,in=225] (41);
    \end{tikzpicture}
    \caption{The supersingular $\{2,3\}$-isogeny graph over $\overline{\mathbb{F}}_{61}$, with vertices labelled by $j$-invariant in $\mathbb{F}_{61^2}$. Solid lines are $2$-isogenies, dashed lines are $3$-isogenies, and $\alpha,\overline{\alpha}$ denote conjugate $j$-invariants in $\mathbb{F}_{61^2}\setminus\mathbb{F}_{61}$. Loops may only be traversed in one direction, while all other edges are undirected and may be traversed in either direction.  
    }
    \label{fig:examplecounting_2}
\end{figure}

\paragraph{Counting using ideals} First, note that $r_\mathcal{O}(6) = 0$ for any $\mathcal{O}$ of discriminant greater in absolute value than $4\cdot 6$. The discriminants of orders in $\mathcal{I}_p$ which also satisfy $|\disc(\mathcal{O})|\leq 24$ are precisely:
\[-7,-8,-11,-23,-24.\]
These are discriminants of maximal orders, in number fields with class numbers $1,1,1,3,$ and $2$, respectively. The elements of norm 6 can be found by factoring the ideals above 2 and 3 and searching for the generators of principal ideals of norm 6. The results are summarized in Table~\ref{tab:examp}. Putting these values into the formula from Theorem~\ref{thm:isogenycount_ideals}:
\[|\mathrm{Cycles}(\{2,3\})^{\mathrm{prin}} / \sim| = \frac{1\cdot 4}{2} + \frac{3\cdot 4}{2} + \frac{2\cdot 2}{2} = 10.\]

\begin{table}[h!]
\centering 
\begin{tabular}{c|c|c}
$\disc(\mathcal{O})$ & $h_\mathcal{O}$ & Elements of norm 6                                                                     \\ \hline
$-7$                 & 1               & -                                                                                      \\ \hline
$-8$                 & 1               & $\pm\left(\sqrt{-2} + 2 \right)$, $\pm\left(\sqrt{-2} - 2 \right)$                     \\ \hline
$-11$                & 1               & -                                                                                      \\ \hline
$-23$                & 3               & $\pm\left(\frac{1 + \sqrt{-23}}{2}\right)$, $\pm\left(\frac{1 - \sqrt{-23}}{2}\right)$ \\ \hline
$-24$                & 2               & $\pm\sqrt{-6}$                                                                         \\
\end{tabular}
\caption{Elements counted in the ideal-theoretic count of isogeny cycles in Example~\ref{ex:23p61}}
\label{tab:examp}
\end{table}

\paragraph{Counting using Brandt matrices:}
    Let us now count the number of $\{2,3\}$-isogeny cycles, but using the Brandt matrix method. 
    As in Section \ref{ssec:brandt}, we have that $| \operatorname{Cycles}_{61}(2,3)^{\operatorname{prin}}/\sim | = \Tr(B(2\cdot3))$. This number can be computed directly using SageMath since we are working with small numbers. 

Moreover, as discussed in Section \ref{ssec:brandt}, if we wanted to avoid computing this number directly, via Brandt matrices, we can use Theorems \ref{GrossBrandtToHurwitz} and \ref{GrossHurwitzSum}, together with the bound $H_p(D) \le H(D)$ or the estimate $H_p(D) \approx \frac{1}{2} H(D)$. 
Indeed, we can easily compute (and this remains easy to do even when the degree, given by its prime factorisation, grows larger): 
    \begin{equation}
        2 \sum_{\substack{d | 6 \\ d > 0}} d - \sum_{\substack{d | 6 \\ d>0 }} \min\{d,6/d\} = 18. 
    \end{equation}
So, we thus obtain the bound $| \operatorname{Cycles}_{61}(2,3)^{\operatorname{prin}}/\sim | \le 18$ and the estimate $| \operatorname{Cycles}_{61}(2,3)^{\operatorname{prin}}/\sim |  \approx 9$, which is not far at all from the true answer $10$. 

\end{example}

Here we provide an example of a $\{\ell_1^{e_1}, \dots, \ell_r^{e_r}\}$-isogeny cycle which does not arise from the theory of ideals discussed in Section~\ref{ssec:idealcounting}.
\begin{example}[]
    \label{example_ideal_counts_limited_exponent}
    We consider again the setup of Example $\ref{ex:23p61}$, namely we take $L=\{2,3\}, p = 61$. A subgraph of $\mathcal{G}(p, L)$ is shown in Figure $\ref{fig:counterexample}$. 
    \begin{figure}
    \centering
    \begin{tikzpicture}
        \node[draw, circle] (alpha) at (-.75,1.3) {\tiny{$\alpha$}};
        \node[draw, circle] (alphabar) at (-.75,-1.3) {\tiny{$\overline{\alpha}$}};
        \node[draw,circle] (50) at (0,0) {\tiny{$50$}};
        \node[draw,circle] (41) at (1.5,0) {\tiny{$41$}};

        \draw[->, blue] (alpha) to node[black,left,pos=0.7] {$\varphi_1$} (50);
        \draw[-] (alphabar) to (50);
        \draw[->, blue] (50) to node[black, below] {$\varphi_2$} (41);

        \draw[-,dashed] (alpha) to [out=210,in=150] (alphabar);
        \draw[<-,dashed, blue] (alpha) to [out=30,in=90] node[black, above, pos=0.6] {$\varphi_3$} (41);
        \draw[-,dashed] (alphabar) to [out=330,in=270] (41);
    \end{tikzpicture}
    \caption{A subgraph of the supersingular $\{2,3\}$-isogeny graph over $\overline{\mathbb{F}}_{61}$ (see Figure \ref{fig:examplecounting_2}).}
    \label{fig:counterexample}
    \end{figure}
    The 4 supersingular curves pictured; $E_{50}, E_{41}, E_\alpha, E_{\overline{\alpha}}$, have embeddings of the imaginary quadratic order $\mathbb{Z}[\sqrt{-11}]$, which has conductor 2. For the latter 3 curves, such an embedding will be primitive. In the case of $E_{50}$ we find that this curve has a primitive embedding of the maximal order $\mathbb{Z}[\frac{1+\sqrt{-11}}{2}]$ contained in the imaginary quadratic field $\mathbb{Q}(\sqrt{-11})$. Note these orders have class numbers 3 and 1 respectively.
    \par
    Consider the endomorphism $\varphi_3 \circ \varphi_2 \circ \varphi_1$ of degree $2^2\cdot 3$ depicted in blue. The isogeny $\varphi_3$ is horizontal with respect to the $\mathbb{Z}[\sqrt{-11}]$-embedding on $E_{41}$ and $E_\alpha$, and thus corresponds to the action of a prime ideal $\mathfrak{l}_3$ lying above $(3) \cdot \mathbb{Z}[\sqrt{-11}]$, namely $(3,1 + \sqrt{-11})$. On the other hand the 2-isogenies $\varphi_1, \varphi_2$ are ascending and descending respectively, and thus do not arise from the action of a prime ideal in this order. 
    \par
    Using degree $2^2 \cdot 3$ in our implementation of Theorem~\ref{thm:isogenycount_ideals}, yields a count of $0$ principal isogeny cycles, when the correct count is $16$.
    This is because our algorithm only counts cycles where steps are all horizontal in $L$-isogeny volcano rims, while some principal cycles such as $\varphi_3 \circ \varphi_2 \circ \varphi_1$ exist, which ascend and descend in the $2$-isogeny volcano.
\end{example}

\section{Graph cuts}\label{sec:graphcuts}

In this section we study cuts of $\ell$- and $L$-isogeny graphs with low edge expansion (see Definition~\ref{def:EdgeExpansion}). We no longer restrict to the case $p\equiv1\pmod{12}$.
For a regular graph, when taking a random edge from a random vertex within the cut, a lower edge expansion implies a higher chance the edge is internal to the cut, and lower chance it leaves the cut.
This is cryptographically relevant as most isogeny-based schemes use Ramanujan or rapid mixing properties of isogeny graphs. In particular, the distribution of endpoints of random walks approaches the discrete uniform distribution on the vertices quickly as the length of the walk increases. 
For low expansion cuts however, should they exist, the probabilities are imbalanced for vertices on opposite sides.

\subsection{Edge expansion in small cuts}\label{sec_small_clusters}

For convenience we now refer to a cut as a set of vertices $C \subset V$, where as a disjoint partition this is $V = C \sqcup (V \setminus C)$.
We define a cut to be \textit{(dis)connected} if the induced subgraph from the vertices in the cut is (dis)connected.
As we are looking for cuts with relatively low edge expansion, we only care about connected cuts by the following result.
\begin{lemma}
    Let $G = (V, E)$ be a graph and $C$ a disconnected cut such that $|C|\leq |V\setminus{C}|$. Then there exists a connected cut $T \subset C$ with $E(T, C \setminus T) = \varnothing$ and $h_G(T) \leq h_G(C)$.
\end{lemma}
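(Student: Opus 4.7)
The plan is to decompose the disconnected cut $C$ into its connected components and then argue that at least one of them performs no worse than $C$ with respect to edge expansion. Write $C = T_1 \sqcup \cdots \sqcup T_k$ with $k \geq 2$ for the connected components of the induced subgraph on $C$. By definition of connected components there are no edges between distinct $T_i$ and $T_j$, so in particular $E(T_i, C \setminus T_i) = \varnothing$ for each $i$, which already delivers one of the two desired conclusions for every candidate $T = T_i$. The task therefore reduces to producing some $i$ with $h_G(T_i) \leq h_G(C)$.

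I would introduce the abbreviations $e_i = |E(T_i, V \setminus T_i)|$, $v_i = \Vol(T_i)$, $e = |E(C, V \setminus C)|$, $v = \Vol(C)$, and $M = \Vol(V)$. The crucial bookkeeping step is the observation that, since no edges cross between the $T_i$'s, every edge leaving $T_i$ in fact leaves $C$. This yields $e_i = |E(T_i, V \setminus C)|$ and, summing, $\sum_i e_i = e$ and $\sum_i v_i = v$. So the numerators and denominators of the $h_G(T_i)$ are exactly the partitions of the numerator and (a candidate for the) denominator of $h_G(C)$.

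I then split into two cases depending on whether $C$ is the smaller or larger side of its cut. If $v \leq M/2$, then $v_i \leq v \leq M - v_i$ for every $i$, so $h_G(T_i) = e_i/v_i$ and $h_G(C) = e/v$; applying the mediant inequality $\min_i(e_i/v_i) \leq (\sum e_i)/(\sum v_i)$ immediately produces an index $i$ with $h_G(T_i) \leq h_G(C)$. If $v > M/2$, then $h_G(C) = e/(M-v)$, and I would treat separately the (at most one) component $T_1$ with $v_1 > M/2$: using that $k \geq 2$ forces $v_1 < v$, hence $M - v_1 > M - v$, and $e_1 \leq e$, which together give $h_G(T_1) = e_1/(M - v_1) \leq e/(M - v) = h_G(C)$. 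For the remaining components, which all satisfy $v_i \leq M/2$, the mediant inequality gives $\min_i(e_i/v_i) \leq e/v$, and then the inequality $e/v \leq e/(M-v) = h_G(C)$ (since $v \geq M-v$) closes the case.

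The main obstacle I anticipate is precisely the asymmetry introduced by the $\min$ in the denominator of the edge expansion: a component $T_i$ could conceivably be large enough that the volume of its complement becomes the active term, and one then has to guard against $V \setminus T_i$ being \emph{larger} than $V \setminus C$, which would weaken the bound in the wrong direction. This is resolved by the strict inequality $v_i < v$ guaranteed by the hypothesis that $C$ is disconnected (so $k \geq 2$); this is the one place where the disconnectedness assumption is used in an essential way, and I would flag it explicitly in the write-up.
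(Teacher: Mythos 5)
Your proof is correct and follows essentially the same route as the paper: decompose $C$ into the connected components of its induced subgraph, observe that the boundary edges and volumes of the components partition those of $C$, and conclude by an averaging argument (your mediant inequality is exactly the paper's proof-by-contradiction in direct form). The only difference is that you treat the $\min$ in the denominator of $h_G$ explicitly via the case split on $\Vol(C)$ versus $\Vol(V)/2$ and the possible single large component, a point the paper's proof silently glosses over by writing the expansions as $|E(C_i, V\setminus C_i)|/|C_i|$, so your version is in fact the more careful one.
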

\begin{proof}
    Let $C_1, C_2, \dots, C_n$ be the vertex sets of the connected components of subgraph induced by \(C\). Let $E_i$ denote the set of edges in the subgraph induced by the vertices $C_i$.
    We have $\Vol(C) = \sum \Vol(C_i)$ and $|E(C, V \setminus C)| = \sum |E(C_i, V \setminus C_i)|$.
    Suppose for contradiction that $h_G(C_i) > h_G(C)$ for all $i$.
    Then 
    \[|E(C_i, V \setminus C_i)| = \Vol(C_i) \cdot h_G(C_i) > \Vol(C_i) \cdot h_G(C).\]
    Summing over $i$ gives $|E(C, V \setminus C)| > \Vol(C) \cdot h_G(C) = |E(C,V\setminus C|)$---this is a contradiction. 
    Hence there must exist $r$ and $T = C_r$ with $h_G(T) \leq h_G(C)$.
\end{proof}

We now consider small cuts of relatively small edge expansion in $\ell$-isogeny graphs.

\begin{lemma}\label{lemma:edgexpansionl}
In a directed $\ell$-isogeny graph \(G\), take a connected cut $C$ of $n$ vertices with $n < k \cdot \log_{\ell}(p) + 1$ and define $R_{\ell, n} = \frac{n \cdot \ell + 1}{n}$.
Suppose $C$ contains no curves with additional automorphisms.
Then $h_G(C) \leq R_{\ell, n}$
with equality if and only if the induced subgraph of $C$ in $G$ is a tree (with dual edges),
and inequality implying $C$ contains a curve with a cyclic endomorphism of degree at most $p^k$.
\end{lemma}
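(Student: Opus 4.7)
The plan is to first establish the edge-expansion bound by a direct edge-counting argument that uses only the regularity of the graph and the connectedness of $C$, and then to extract a cyclic endomorphism of small degree from any excess of internal edges beyond a spanning tree.

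First I would set up the count. Because $C$ contains no curves with extra automorphisms, every vertex of $C$ has out-degree $\ell+1$, so vertices of $C$ contribute $(\ell+1)n$ directed out-edges in total. Each such edge is either internal to $C$ or crosses the cut. Since $C$ is connected, its induced subgraph carries at least the edges of a spanning tree, giving a lower bound on the internal edges and hence an upper bound on $|E(C, V\setminus C)|$. Dividing by the appropriate volume (which, in the small-cut regime $n \le k\log_\ell p$, reduces to $\Vol(C)$) yields the inequality $h_G(C) \le R_{\ell, n}$. Equality then holds exactly when the induced subgraph on $C$ has no edge beyond the spanning tree, that is, when it is a tree (with dual edges): any additional internal edge strictly reduces the crossing count and hence strictly reduces $h_G(C)$.

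For the strict inequality, if $h_G(C) < R_{\ell,n}$ then the induced subgraph on $C$ contains strictly more edges than a spanning tree, so it contains a graph cycle $v_1 \to v_2 \to \cdots \to v_m \to v_1$. Composing the $\ell$-isogenies along this cycle (under an arbitrary assignment, Definition~\ref{def:arb_assign}) produces an endomorphism $\alpha \in \End(E_{v_1})$ of degree $\ell^m$. Distinctness of consecutive vertices along a graph cycle prevents consecutive isogenies from being dual to one another (since $\varphi_{i+1} = \widehat{\varphi}_i$ would force $v_{i+1} = v_{i-1}$), so the walk is non-backtracking in the sense of Definition~\ref{def:backtracking}. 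By \cite[Proposition 4.5]{bank2019cycles}, the kernel of $\alpha$ is then cyclic, and since $m \le n \le k\log_\ell p$ (as $n$ is an integer less than $k\log_\ell p + 1$), we have $\deg\alpha = \ell^m \le p^k$, as required.

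The main obstacle I anticipate is pinning down the edge count so that the upper bound lands precisely on $R_{\ell, n} = (n\ell + 1)/n$: one must track carefully whether each spanning-tree edge contributes one or two directed edges to the internal count, and how $\Vol$ is normalised for the cut, so that the inequality $(\ell+1)n - (\text{internal edges}) \le n\ell + 1$ emerges with the correct constants. The no-extra-automorphisms hypothesis does double duty here: it guarantees that every vertex of $C$ really has out-degree exactly $\ell+1$ with no spurious loops or identifications, and it ensures that distinct directed edges correspond to distinct isogenies, so that graph cycles translate faithfully into non-backtracking isogeny cycles and hence into genuine cyclic endomorphisms.
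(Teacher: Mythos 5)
Your proposal follows essentially the same route as the paper's proof: the same regularity/spanning-tree edge count giving the bound $h_G(C)\leq R_{\ell,n}$ and the tree characterisation of equality, and in the strict-inequality case the extraction of a graph cycle in $C$ whose composed $\ell$-isogenies form a non-backtracking closed walk, hence a cyclic endomorphism of degree at most $p^k$ (the paper cites \cite[Proposition 1]{CGL09} where you invoke \cite[Proposition 4.5]{bank2019cycles}, an inessential difference). The factor-of-two bookkeeping you flag is glossed over in the paper's own proof as well --- after merging dual edges it subtracts the $n-1$ tree edges once from $n(\ell+1)$ and normalises by $n$ to land exactly on $R_{\ell,n}$ --- so your argument matches the intended proof.
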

\begin{proof}
    Within $G$ merge a maximal pairing of directed edges with their duals giving a graph with undirected edges and possibly loops which remain directed.
    It remains $(\ell+1)$-regular, directed edges being self-dual loops or edges from curves with additional automorphisms.
    We ignore the latter, as no such curves are in $C$.
    Consider the induced subgraph \(G_C\) of $C$ in $G$.
    Suppose \(G_C\) has no cycles (and so no loops). Then, it is by definition an undirected tree (since it is connected), and has $n-1$ edges.
    As the graph is $(\ell+1)$-regular, each vertex of $C$ has $(\ell+1)$ edges, so $E(C, V \setminus C) = n(\ell+1) - (n-1) = n \ell + 1$, giving $h_G(C) = R_{\ell, n}$.
    If \(G_C\) is not a tree, then it contains a (graph-theoretic) cycle, and so at least one additional edge giving $h_G(C) < R_{\ell, n}$. With $n$ vertices the cycle has degree at most $\ell^{n-1} \leq \ell^{k\log_{\ell}(p)} = p^k$,
    and composes to a cyclic endomorphism by \cite[Proposition 1]{CGL09}.
\end{proof}

One interpretation of \emph{relatively} small edge expansion is compared to connected cuts of the same number of vertices $n$.
\Cref{lemma:edgexpansionl} allows us to relate the distribution of such cuts to the previously studied distribution of $p^k$-valleys 
 \cite{BonehLove}.
 \begin{remark}
By \Cref{lemma:edgexpansionl}, small cuts with ``relatively'' small edge expansion must have edge expansion strictly less than $R_{\ell, n}$, as all these cuts have edge expansion less than or equal to $R_{\ell, n}$.
Moreover, the lemma together with \cite[Section 4]{elkies} shows for a fixed constant $k < \frac{2}{3}$ and $n < k \cdot \log_{\ell}(p) + 1$, cuts of size $n$ contain curves with endomorphisms of degree less that $p^k$.
For sufficiently large $p$ these curves are rare, appearing in small clusters which are far apart, referred to as $p^k$-valleys.
For smaller $k$ (and also $n$), the $p^k$-valleys become sparser \cite{BonehLove}.
 \end{remark}

 If we try the same argument for $L$-isogeny graphs, we run into an issue, due to the existence of graph-theoretic cycles which compose to scalar endomorphisms (see~\Cref{fig:isogenydiamond}). To maximize expansion, we seek to minimize cycles in the subgraphs induced by the cut. Trees have maximal expansion. In the $\ell$-isogeny graph, consider non-backtracking paths in subtrees of the full graph. These paths correspond to cyclic isogenies which are not endomorphisms. 
 \Cref{lemma:edgexpansionl} references the edge expansion of trees, and we need the appropriate analogue of a tree in the $L$-isogeny graph setting. In this setting, the correct structure is a connected induced subgraph of $\mathcal{G}(p,L)$ where all cycles compose to scalar endomorphisms, which we call a `\treestar.'
 
\begin{definition}[Minimum Uniform Edge Expansion $R_{L, n}$]
    Fix a set of primes primes $L=\{\ell_1,\dots, \ell_r\}$ and $n > 0$.
    For a prime $p \not\in L$ we define $R_{L, n}(p)$ as the minimum edge expansion over 
    We then define $R_{L, n} = \lim_{p \to \infty} R_{L, n}(p)$.
\end{definition}

\begin{lemma}
    Fix $L$ and $n$, as above. The limit $R_{L, n} = \lim_{p \to \infty} R_{L, n}(p)$ is well-defined, and is attained for sufficiently large \(p\).
\end{lemma}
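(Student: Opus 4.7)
The plan is to show that $R_{L,n}(p)$ takes values in a finite discrete set independent of $p$ and is eventually constant as $p \to \infty$. Since $\mathcal{G}(p,L)$ is $d$-regular with $d := |L| + \sum_{\ell \in L}\ell$ depending only on $L$, a connected $n$-vertex cut $C$ with $n \leq |V|/2$ (which holds for $p$ large, as $|V| = \Theta(p)$) has $\Vol(C) = dn$, so
$$h_G(C) = \frac{|E(C, V \setminus C)|}{dn} \in \left\{\frac{k}{dn} : 0 \leq k \leq dn\right\}.$$
Hence it suffices to prove that $R_{L,n}(p)$ stabilizes.

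First I would introduce the universal $L$-structure $\mathcal{T}_L$: the $d$-regular graph obtained by freely adjoining prime-labelled edges modulo only backtracking and the isogeny-diamond relations of Example~\ref{ex:IsogenyDiamond}, so that any two consecutive edges of distinct degrees complete a commuting square. Every connected induced subgraph of $\mathcal{G}(p,L)$ all of whose cycles compose to scalar endomorphisms lifts canonically to $\mathcal{T}_L$. Since $\mathcal{T}_L$ is locally finite, the set $\mathcal{S}_{L,n}$ of isomorphism classes of connected $n$-vertex subgraphs of $\mathcal{T}_L$ is finite. Define
$$R^*_{L,n} := \min_{S \in \mathcal{S}_{L,n}} \frac{dn - 2|E(S)|}{dn};$$
this minimum is attained by some $S^* \in \mathcal{S}_{L,n}$, and the lifting argument yields $R_{L,n}(p) \geq R^*_{L,n}$ for every prime $p$.

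For the reverse inequality, I would show that for all sufficiently large $p$, the minimizer $S^*$ is realized as an induced subgraph of $\mathcal{G}(p,L)$. Any obstruction amounts to an unwanted short non-scalar cycle within the candidate embedded ball; such a cycle composes to a non-scalar endomorphism of degree at most $d^n$, and by the $p^k$-valley density estimates in \cite{BonehLove} (used similarly in Lemma~\ref{lemma:edgexpansionl}) the number of supersingular vertices supporting any such endomorphism is $o(p)$. A union bound over the $O(d^n)$ vertices in a ball of radius $n$ then yields an unobstructed base vertex as soon as $p \gg d^n$, giving $R_{L,n}(p) = R^*_{L,n}$ for all large $p$ and hence showing the limit exists, equals $R^*_{L,n}$, and is attained. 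The main obstacle is making the realizability step fully rigorous: one must rule out not only isolated short non-scalar cycles but all accidental identifications within the embedded neighbourhood, which likely requires a careful ideal-theoretic construction along the lines of Section~\ref{ssec:idealcounting} to exhibit the minimizing configuration explicitly.
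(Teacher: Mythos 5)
Your proposal takes essentially the same approach as the paper: both bound the degree of cycles that can appear in an $n$-vertex induced subgraph by a $p$-independent constant, invoke the $p^k$-valley density results of \cite{BonehLove} to find, for all sufficiently large $p$, a depth-$n$ neighbourhood free of short non-scalar cycles, and argue that the minimum edge expansion is attained on such a neighbourhood, which is a fixed ``universal'' graph independent of $p$. Your explicit $\mathcal{T}_L$ is a cleaner formalisation of what the paper calls the depth-$(n+1)$ neighbourhood disjoint from $K$-valleys, and your two-sided split (lifting gives $R_{L,n}(p)\geq R^*_{L,n}$ unconditionally; realisability gives equality for large $p$) tidies up the paper's ``one can then check'' steps, but the underlying argument and the gap you flag in the realisability step are the same as in the paper.
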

\begin{proof}
    The length of a cycle in an induced subgraph of $\mathcal{G}(p, L)$ on $n$ vertices can be bounded by the fact there are exactly $n\cdot(\ell_i + 1)$ outgoing $\ell_i$-isogenies, up to post-composition with automorphism, originating from vertices in the subgraph.
    Hence the degree of the endomorphism formed by composing a cycle can be upper bounded by $K = \prod_i \ell_i^{n\cdot(\ell_i + 1)}$,  which is independent of $p$.
    Hence for sufficiently large $p$ we may assume $K \ll p$.
    By the results of \cite{BonehLove}, increasing $p$ increases the distance between $K$-valleys until there is a large enough space in the graph for a curve $E_p$ and depth-$(n+1)$ 
    neighbourhood of $L$-isogenies, disjoint from $K$-valleys.
    This means the neighbourhood is a \treestar, i.e. it contains no non-scalar cycles of degree less than or equal to $K$.
    The $n$-vertex cut defining $R_{L, n}(p)$ has an isomorphic cut (i.e., graph isomorphism of the induced subgraphs) within the neighbourhood of $E_p$.
    The neighbourhood of $E_p$ is isomorphic as a graph to the neighbourhood arising from any larger value of $p$.
    Take a point far enough from the $K$-valleys so that the neighbourhood in the $\ell_i$-isogeny graph looks like a tree, for each $i = 1,\hdots,n$. The cycles in the union of these $\ell_i$-isogeny trees compose to scalar endomorphisms. 
    Hence for large enough $p$, $R_{L, n} = R_{L, n}(p)$ is the minimum edge expansion of all $n$-vertex cuts in the neighbourhood of $E_p$.
\end{proof}

We now consider the analogue of Lemma \ref{lemma:edgexpansionl} in $L$-isogeny graphs.
\begin{lemma}
\label{lem:Lcyclic}
In a directed $L=\{\ell_1,\dots,\ell_r\}$-isogeny graph, take a connected cut $C$ of $n$ vertices with $n < k \cdot \log_{\ell_r}(p) + 1$.
Suppose $C$ contains no curves with additional automorphisms.
Then $h_G(C) < R_{L, n}$ implies $C$ contains a curve with a cyclic endomorphism of degree at most $p^k$.
\end{lemma}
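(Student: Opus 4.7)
The strategy parallels the proof of Lemma~\ref{lemma:edgexpansionl}, now exploiting the fact that $R_{L,n}$ is defined as the minimum edge expansion over $n$-vertex connected induced subgraphs whose cycles all compose to scalar endomorphisms. The plan is a contrapositive argument: assume $h_G(C) < R_{L,n}$ and produce a non-scalar cyclic endomorphism of bounded degree at some curve in $C$.

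First I would observe that if every closed walk in the induced subgraph of $C$ composed to a scalar endomorphism, then $C$ would itself be among the subgraphs parameterising $R_{L,n}$, forcing $h_G(C) \geq R_{L,n}$ and contradicting our assumption. Hence there exists a closed walk $w$ based at some $E \in C$ whose composition $\alpha \in \End(E)$ is not a scalar. Applying Algorithm~\ref{alg:canonicaldecomp} to $\alpha$ produces a canonical decomposition $\alpha = [m]_E \circ \beta$, where $\beta$ is non-trivial (since $\alpha$ is non-scalar) and has cyclic kernel by Corollary~\ref{prop:cyclickernel}.

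To bound $\deg(\beta)$, I would choose $w$ of minimum length among non-scalar closed walks in the induced subgraph. By a spanning-tree / fundamental-cycle argument analogous to that in Lemma~\ref{lemma:edgexpansionl}, the length of such a minimal $w$ is at most $n-1$. Since each edge contributes a factor of at most $\ell_r$ to the degree, we obtain $\deg(\beta) \leq \deg(\alpha) \leq \ell_r^{n-1}$, and the hypothesis $n < k \log_{\ell_r}(p) + 1$ then yields $\deg(\beta) \leq p^k$.

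The main obstacle is justifying the length bound on the shortest non-scalar closed walk. In the $\ell$-isogeny case, any cycle off a spanning tree is automatically non-scalar, so the bound is immediate; in our $L$-isogeny setting many short cycles (e.g.\ the isogeny diamonds of Example~\ref{ex:IsogenyDiamond}) compose to scalars, so a more careful argument is needed. One would fix a spanning tree $T$ of the induced subgraph and consider the fundamental cycles associated to the remaining edges: if every such fundamental cycle were scalar, one would aim to show (tracking conjugation by paths in $T$, since $\End(E)$ is non-commutative) that every closed walk in the induced subgraph composes to a scalar, contradicting the existence of a non-scalar $w$. Making this non-commutative descent rigorous is the delicate step; once done, one obtains a fundamental cycle of length at most $n-1$ witnessing the required cyclic endomorphism.
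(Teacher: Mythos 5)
Your overall strategy is the paper's: the paper proves this lemma by saying it is "essentially the same as Lemma~\ref{lemma:edgexpansionl}, replacing `tree' with the all-scalar-cycle configuration defining $R_{L,n}$", i.e.\ the dichotomy you set up (either every cycle of the induced subgraph composes to a scalar, forcing $h_G(C)\ge R_{L,n}$, or some non-scalar cycle exists and its principal part, extracted via Algorithm~\ref{alg:canonicaldecomp} and Corollary~\ref{prop:cyclickernel}, is the desired cyclic endomorphism). Your use of the canonical decomposition to justify "cyclic" is exactly the right way to flesh out the paper's terse argument. The problem is where you quantify: you run the dichotomy over \emph{all closed walks} in the induced subgraph, and then you must shrink an arbitrary non-scalar closed walk down to length at most $n-1$ — the "non-commutative descent" you explicitly leave open. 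As written, the proposal therefore stops short of a proof at precisely the step the lemma needs.

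The gap is avoidable rather than fatal, because the quantity $R_{L,n}(p)$ is defined by requiring that the \emph{cycles of the induced subgraph} (the bounded-length objects considered in the $\ell$-case proof of Lemma~\ref{lemma:edgexpansionl}, not arbitrary closed walks) compose to scalars. So the correct dichotomy is over that same class: either every such cycle in the induced subgraph of $C$ is scalar, in which case $C$ is one of the configurations over which $R_{L,n}(p)$ is minimised and $h_G(C)\ge R_{L,n}$; or some cycle of the induced subgraph is non-scalar, and since it lives on at most $n$ vertices its degree is at most $\ell_r^{\,n-1}$ (the same bound, with the same small slack, as in Lemma~\ref{lemma:edgexpansionl}), whence $\le p^k$ by the hypothesis on $n$ — no shortening of long walks is ever required. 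For what it is worth, your descent can in fact be completed: scalar maps are central, so the endomorphism obtained from a closed walk changes only by scalar factors under insertion or removal of backtracking, walks homotopic in the induced subgraph therefore compose to the same endomorphism up to scalars, and the fundamental group is free on tree-conjugated fundamental cycles $\widehat{\tau}\circ c\circ\tau$, each of which is scalar whenever $c$ is (using $\widehat{\tau}\circ[s]\circ\tau=[s\deg\tau]$). So your route can be repaired, but it is a detour the paper's framing never needs, and in its current form your argument is incomplete at that step.
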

\begin{proof}
    Follows as in the proof of Lemma~\ref{lemma:edgexpansionl}, replacing `tree' with depth-$n$ `\treestar' from the definition of $R_{L, n}$.
\end{proof}

\begin{remark}
    
    For large $p$ there are many depth-$d$ $L$-isogeny neighbourhoods containing $n$ vertices within $\mathcal{G}(p,L)$ without non-scalar cycles.
    An $n$-vertex cut $C$ has ``relatively small" edge expansion if $h_G(C)<R_{L,n}$. 
    (For the $L$-isogeny graph, it is possible for $h_G(C)> R_{L,n}$, where as $h_G(C)\leq R_{\ell,n}$ by 
    \Cref{lemma:edgexpansionl}.) By \Cref{lem:Lcyclic} along with the result of~\cite[Section 4]{elkies} implies that for $k < \frac{2}{3}$ and small enough $n$ that any $n$-vertex cut of relatively small edge expansion, intersects a $p^k$-valley, which for larger $p$ and smaller $k$ are increasingly rare and sparse.
    
\end{remark}

\subsection{Clustering based on Fiedler's vector}
We now discuss finding larger subsets of vertices which define cuts of relatively small edge expansion.
We take an experimental approach on isogeny graphs $\mathcal{G}(p, L)$ with $p$ as large as computationally feasible.
Some preliminary attempts showed for $p$ large enough, $n$-vertex sets 
with edge expansion less than $R_{L, n}$ in $\mathcal{G}(p, L)$ are rare.
Searching through all $n$ vertex subsets attempting to find one, being exponential time in $p$, is computationally infeasible.
Instead of trying them all, techniques from spectral graph theory allow us to make a sensible guess. 

While several algorithms exist using spectral methods to obtain low edge expansion cuts, we will use \emph{Fiedler's algorithm}, which we explain here.
For a graph $G = (V, E)$ with vertices denoted \(v_1, \hdots, v_k\),
it proceeds as follows:
\begin{enumerate}[noitemsep]
    \item Compute the eigenvector $\vec{x}$ of the Laplacian matrix corresponding to the second largest eigenvalue $\lambda_2$\footnote{For regular graphs this is also the eigenvector of the second largest eigenvalue of the adjacency matrix.}. 
    \item Order vertices $v_i$ with respect to the quantity $x_i$, largest first, and relabel them $u_1, \dots, u_k$ with respect to this ordering.
    \item For $i = 1, \dots, k$, let $C_i = \{u_1, \dots, u_i\}$, called a \textit{sweep cut}, and compute $h_G(C_i)$ and $h_G(V \setminus C_i)$.
    \item Return the cut $C_i$ such that $\max(h_G(C_i), h_G(V \setminus C_i))$ is minimised.
\end{enumerate}
The algorithm is polynomial time in $|V| + |E|$, which for $G = \mathcal{G}(p,L)$ means polynomial time in $p$ and $\sum \ell_i$.
The eigenvector of the second largest eigenvalue $\lambda_2$---often called the \emph{Fiedler vector}---is used as it is believed to provide a good measure of connectivity.
Its use is attributable to Fiedler \cite{fiedler1973algebraic}.
In his original work however, Fiedler suggested use of the \textit{Fiedler cut} constructed as $C = \{v_i : x_i \geq 0 \}$ for clustering.
More generally, when using multiple eigenvectors, this is \textit{hyperplane rounding}, which for a single vector amounts to picking a threshold to define the cut.
A choice of $0$ for the threshold is better suited to well-balanced graphs, while allowing it to vary can result in better clustering  of unbalanced graphs.
The concept of finding cuts to minimise edge expansion didn't arise until later, and it is now more common to use the minimal edge expansion to decide the threshold.
Also, since we desire a partition into two `inter-connected' clusters either side of the cut, we minimise $\max\{h_G(C_i), h_G(V \setminus C_i)\}$ rather than just $h_G(C_i)$, to ensure the complement also has low edge expansion.
For random graphs we expect the resulting cut size to be around half of the vertices.
For a summary of spectral clustering techniques using Fiedler's vector see \cite{fielderlecturenotes, Mahoney16a}.

\begin{example}
    For the isogeny graph $\mathcal{G}(p, L)$ with $p=419$ and $L=\{2,3\}$, the second largest eigenvalue of the Laplacian is $\lambda_2 \approx 11.17$.
    The above algorithm finds sweep cut $C_{18}$ has the minimal value $\max(h_G(C_{18}), h_G(V \setminus C_{18})) \approx 0.46$.
    The graph has a total of $36$ vertices, so $C_{18}$ contains exactly half.
    The code for this example in given in file \verb|fiedler.ipynb|.
\end{example}

\subsection{Other Algorithms for Computing Graph Cuts}\label{sec_orderings}

To see if the cuts resulting from this algorithm have small edge expansion, when compared to other cuts of roughly the same size, we now give an alternative non-spectral method.
To do this we replace the ordering of vertices in the previous algorithm with a different ordering.
We want the vertices to form a subgraph that looks as `complete' as possible.
The most natural way to do this is to take the neighbourhood of a vertex.
We present two variants of this idea.
\begin{description}[wide, labelindent=0pt]
    \item[\textbf{Neighbour Expansion}:] Pick a starting vertex $[v_0]$ then add its neighbours $[v_0, v_1, v_2, v_3]$ then add the neighbours of $v_1$, then the neighbours of $v_2$ etc, until all vertices are ordered $[v_0, v_1, \dots, v_n]$.
    
    \item[\textbf{Greedy Neighbour Expansion}:] Pick a starting vertex $S_0 = [v_0]$ then define $v_{i+1}$ to be the neighbour of vertices in $S_i$ such that $\phi(S_i \cup \{v_{i+1}\})$ in minimised, then $S_{i+1} := S_i \cup \{v_{i+1}\}$. Here $\phi$ of a cut denotes the maximum of the edge expansion of the cut and the edge expansion of its complement.
\end{description}
Both of these algorithms are exponential in $\log(p)$ as they have to loop over all vertices in the graph. However the second is slower as it is quadratic in the number of vertices (as is Fiedler's algorithm), while the first is linear in the number of vertices.

The optimum solution, that is, the cut $S \subseteq V$ minimising the value of $\phi(S)$, is called the \textit{Cheeger cut}.
While computationally infeasible to compute, there are known upper and lower bounds,
$$\frac{\lambda_{-2}}{2} \leq \phi(S) \leq \sqrt{2\lambda_{-2}} ,$$
where $\lambda_{-2}$ is the second smallest eigenvalue of the Laplacian.
We also compute these bounds for reference.

Results are given in Figure \ref{fig:fiedler_ordering_results}
with code in file \verb|fiedler_comparison.ipynb|.
With the spectral ordering for reference, we try both algorithms three times, staring from different random vertices, and take the average of values $\phi$ found. From the results we make the following observations:
\begin{enumerate}
\setlength\itemsep{0.1em}
    \item The spectral ordering performs badly, suggesting there is not one distinguishable low edge 
 expansion cluster of size roughly $\frac{|V|}{2}$. Perhaps there are more, and the spectral ordering is merging such cuts, or perhaps better cuts have fewer 
 vertices. 
    \item The neighbour ordering finds better cuts than the spectral ordering, suggesting neighbourhoods of vertices form smaller edge expansion clusters.
    The greedy neighbour ordering does even better, which motivates further study of the less-trivial structures that
    arise from this ordering.
    \item As the size of the set $L$ increases, the improvements reduce.
    The Cheeger lower bound also increases, and so there is less room for improvement.
    For the neighbour orderings this is likely as there are more edges, increasing the connectivity of any subset of vertices.
    This also suggests $\ell$-isogeny graphs have smaller expansion clusters, and more of a bottleneck than $L$-isogeny graphs with larger $L$.
\end{enumerate}
Figure~\ref{fig_fiedler_orderings} with code in \verb|fiedler_viz.ipynb|, gives a visual comparison of these results.
Each image represents an adjacency matrix where a non-zero entry is given by a dot.
The different images correspond to vertices in the adjacency matrix ordered in different ways.
We first order the vertices using the spectral ordering, neighbour ordering and greedy neighbour ordering.
As the resulting cuts are around half the vertices, the edges within each cut are those in the top left quadrant of each image.
The edges fully outside the cut are those in the bottom left quadrant.
The edges between the cut and its complement lie in the off-diagonal quadrants.
Hence a low edge expansion cut will have more vertices in the diagonal quadrants and less in the off-diagonal quadrants.
To gain a perspective of the density within each quadrant, we give the same images with vertices  within the cut randomly shuffled, and those in the complement also randomly shuffled.

For the spectral ordering we see this does no better than a random ordering of vertices, as all quadrants are equally full.
The neighbour ordering sees some white space in the off diagonal quadrants,
since for the first few curves we pick, their neighbours are guaranteed to also be early on in the ordering, and so there are no edges to vertices at end of the ordering.
The greedy neighbour ordering, which performs best, essentially pushing all curves which are not close to the starting curves towards the end of the ordering.

\begin{figure}[htbp!]
    \centering
    \resizebox{\textwidth}{!}{
    \begin{tabular}{||c c c c c c||} 
     \hline
     \(p, L\) & \makecell{Spectral \\ Ordering} & \makecell{Neighbour \\ Ordering} & \makecell{Greedy \\ Ordering} & \makecell{Cheeger \\ Lower Bound } & \makecell{Cheeger \\ Upper Bound } \\ [0.5ex] 
     \hline\hline
      419, \{3\} & 0.597 &  0.319 & 0.225 & 0.071 & 0.533 \\
     \hline
     419, \{2,3\} & 0.452 & 0.289 & 0.286 & 0.096 & 0.619 \\
     \hline
     419, \{2,3,5,7,11\} & 0.488 & 0.491 & 0.413 & 0.151 & 0.776  \\
     \hline
     5569, \{3\}& 0.494 & 0.301 & 0.178 & 0.068 & 0.520  \\
     \hline
     5569, \{2,3\} & 0.484 & 0.322 & 0.195 & 0.092 & 0.608  \\
     \hline
     5569, \{2,3,5,7,11\} & 0.497 & 0.473 & 0.343 & 0.275 & 1.049 \\
     \hline
     10007, \{3\} & 0.489 & 0.287 & 0.170 & 0.065 & 0.508 \\
     \hline
    \end{tabular}
    }
    \caption{Fiedler's algorithm with different vertex orderings.}
    \label{fig:fiedler_ordering_results}
\end{figure}

\begin{figure}[htbp!]
\centering
\begin{subfigure}{.32\textwidth}
    \centering
    \includegraphics[width=0.85\textwidth]{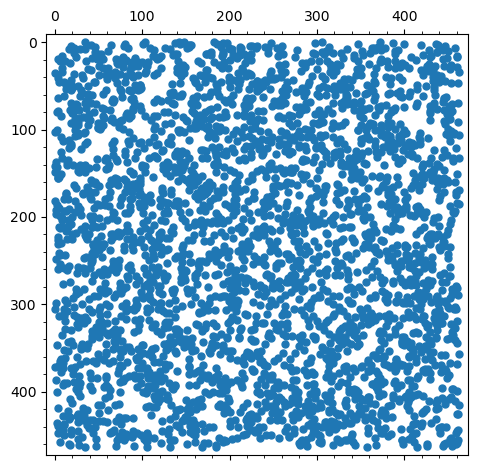}
    \subcaption{Spectral ordering}
\end{subfigure}%
\begin{subfigure}{.32\textwidth}
    \centering
    \includegraphics[width=0.85\textwidth]{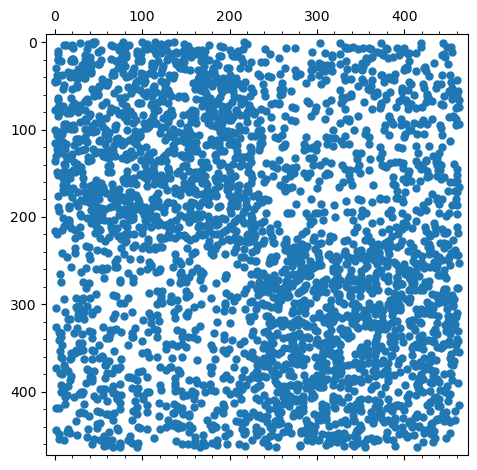}
    \subcaption{Neighbour ordering}
\end{subfigure}%
\begin{subfigure}{.32\textwidth}
    \centering
    \includegraphics[width=0.85\textwidth]{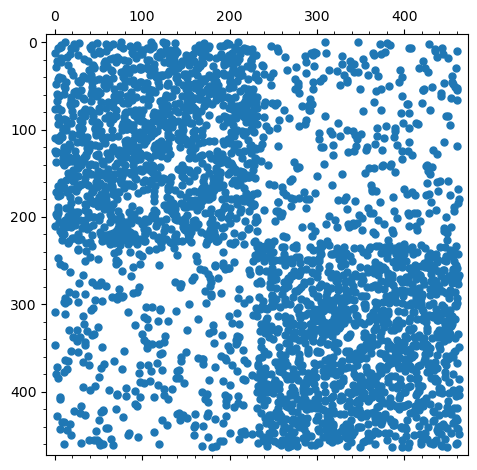}
    \subcaption{Greedy neighbour ordering}
\end{subfigure}
\caption{Adjacency matrix plot for $p=5569$ and $L=\{2,3\}$. Dots are edges. 
Vertices are ordered so that the first half of the vertices come from one side of the cut, and the second half of the vertices come from the other side.}
\label{fig_fiedler_orderings}
\end{figure}

\section{Future work}

There are several notable areas for further study. 

\subsection{Further principal cycle counting}

Firstly, within Sections~\ref{sec:cycles} and \ref{sec:countingcycles} we restricted to the case $p\equiv 1\pmod{12}$.
For other values of $p$, curves with additional automorphisms exist and so further consideration is required to determine if, and how, our principal cycle counts could be adapted to address this.

Furthermore Theorem~\ref{thm:isogenycount_ideals}, counting principal $(\ell_1^{e_1}\ell_2^{e_2}\cdots\ell_r^{e_r})$-isogeny cycles via ideals in class groups of imaginary quadratic orders, only works for products of distinct primes, with all exponents $e_i = 1$.
Example~\ref{example_ideal_counts_limited_exponent} demonstrated how the approach fails for larger exponents.
The issue arises for $e_i \geq 2$ and a starting curve with a primitive $\mathcal{O}$-embedding with $\ell_i \mid \cond(\mathcal{O})$, cycles may arise from ascending the $\ell_i$-isogeny volcano
then descending back down.
These ascending and descending isogenies are not represented by the action of invertible ideals in $cl(\mathcal{O})$.

One method to address this is to use an approach based on the volcano `rim-hopping' algorithm of \cite{ACDEKW2024}.
Given the norm $\ell_1^{e_1}\ell_2^{e_2}\cdots\ell_r^{e_r}$ generator $\eta$ of $\mathcal{O}$, 
one can obtain another generator from a translate $k + \eta$ with $k \in \mathbb{Z}$.
Then factorising its norm $N(k + \eta) = q_1^{f_1} \cdots q_m^{f_m}$ look for principal products of ideals in $\mathcal{O}$ which consist of $f_i$ ideals of norm $q_i$ for each $i$. 
If $k$ is chosen such that $q_i \nmid \cond(\mathcal{O})$, in terms of endomorphisms we obtain all degree $q_1^{f_1} \cdots q_m^{f_m}$ endomorphisms $\varphi$ which give primitive $\mathcal{O}$-embeddings.
Translating back, $\psi = \varphi - [k]$, we obtain all degree $\ell_1^{e_1}\ell_2^{e_2}\cdots\ell_r^{e_r}$ endomorphisms giving primitive $\mathcal{O}$-embeddings.
From the product of norm $q_i$ ideals we can easily check if $\varphi$ is cyclic by ensuring ideal inverses do not appear in the product. However,
further thought is needed to determine how to check, from the ideal product alone, which translated endomorphisms $\psi$ will be cyclic.
It is also possible more careful analysis of oriented isogeny volcanoes may result in an alternative approach which more closely matches Theorem~\ref{thm:isogenycount_ideals}.

\subsection{Scalar cycle counting}
Within Section~\ref{sec:countingcycles}, we restricted our counts to principal isogeny cycles, up to the equivalence of having the same canonical decomposition. Endomorphisms arising from principal cycles have the advantage that all of their decompositions yield genuine (non-backtracking) cycles. 
However, counting principal cycles is not the same as counting graph-theoretic cycles. To bridge this gap, one would have to also consider non-principal cycles, as well as cycles not up to canonical equivalence. 
The task of counting non-principal cycles additionally requires determining how many prime decompositions of a given non-cyclic endomorphism have backtracking. In particular, if we counted the number of graph-theoretic cycles which compose to scalar endomorphisms, this would be a step towards understanding the $L$-isogeny cycles generally. 
This is also closely tied to computing the minimum uniform edge expansion $R_{L, n}$ defined in Section~\ref{sec_small_clusters}.

If $[n] = [\ell_1\ell_2\cdots\ell_r]$, the total number of walks corresponding to $[n]$ is $\frac{(2r)!}{2^r}\prod_{i=1}^r(\ell_i+1)$. If $r=1$, none of these are a graph-theoretic cycle (non-backtracking). If $r=2$, precisely $2(\ell_1+1)(\ell_2+1)$ of these walks is a graph-theoretic cycle (without backtracking), namely the isogeny diamond and the same diamond traversed in the opposite direction. If $r>2$, an inclusion-exclusion argument can be applied to find the number of graph-theoretic cycles.

\subsection{Further graph clustering}

From Section~\ref{sec_orderings} the unexpectedly good performance of the greedy neighbour ordering prompts questions into the structure of this ordering, and what the consequences of this might be for sampling isogenies in cryptography.
There are also alternative graph clustering techniques, such as flow-based clustering, where the results could give better indications of graph structure.


One may also wish to consider a more extreme interpretation of small clusters; those with a single vertex. Their connectivity with the rest of the graph can be represented by a \textit{distance distribution}.
\begin{definition}[Distance distribution]
The distance distribution for a vertex $v\in V$ of a graph $G = (V,E)$ is defined as the set $(N_0(v),N_1(v),\dots)$, where $N_i(v)$ is equal to the number of vertices in the graph $G$ which are distance $i$ from the vertex $v$.    
\end{definition}
For random graphs this typically resembles a normal distribution, and the same seems to hold for isogeny graphs, as can be seen in Figure~\ref{fig_dist_distribution}.
Examining outliers and skew of these distributions could be insightful.
\begin{figure}[htbp!]
    \centering
    \includegraphics[scale=.4]{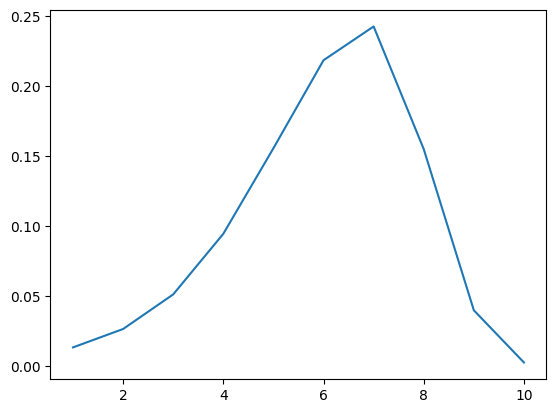}
    \caption{Distance distribution in $2$-isogeny graph with $p=2689$, from vertex with $j$-invariant $30$. Distance is denoted by $x$-axis, and the $y$-axis gives the proportion of vertices of this distance.}
    \label{fig_dist_distribution}
\end{figure}

\newpage 
\bibliographystyle{elsarticle-num}
\bibliography{biblio}\vspace{0.75in}

\end{document}